\newcommand{\adj}{\operatorname{adj}}
\def\bbu{{\boldsymbol{u}}}
\def\bbU{{\boldsymbol{U}}}
\def\bu{\mathbf{u}}
\def\bv{\mathbf{v}}
\def\bU{\mathbf{U}}
\def\cU{\mathcal{U}}
\def\bV{\mathbf{V}}
\def\bbx{{\boldsymbol{x}}}
\def\bx{\mathbf{x}}
\def\bby{{\boldsymbol{y}}}
\def\by{\mathbf{y}}
\def\X{\mathbf{X}}
\def\bbf{\mathbf{f}}
\def\bbg{\mathbf{g}}
\def\R{\mathbb{R}}
\def\bD{\mathbf{D}}
\def\F{\mathbf F}
\def\G{\mathbf G}
\def\cM{\mathcal M}
\def\cI{\mathcal I}
\def\bQ{\mathbf Q}
\def\cD{\mathcal D}
\def\A{\mathbf A}
\def\cC{\mathcal C}
\def\id{\mathbf{id}}
\def\bP{\mathbf{P}}
\def\bbS{\mathbb S}
\def\bcero{\mathbf 0}
\numberwithin{equation}{section}
\newtheorem{theorem}{Theorem}[section]
\newtheorem{corollary}[theorem]{Corollary}
\newtheorem{definition}{Definition}[section]
\newtheorem{lemma}[theorem]{Lemma} 
\newtheorem{proposition}[theorem]{Proposition}
\newtheorem{problem}[theorem]{Problem}
\newtheorem{remark}[theorem]{Remark}
\newtheorem{conjecture}[theorem]{Conjecture}
\begin{document}

\title{Invariant Hulls and Geometric Variational Principles}
\author{Pablo Pedregal}
\date{} % delete this line to display the current date
\thanks{Departmento de Matemáticas, Universidad de Castilla-La Mancha, 13071 Ciudad Real, SPAIN. Supported by grants 
PID2020-116207GB-I00, and  SBPLY/19/180501/000110}
\begin{abstract}
We investigate functionals defined on manifolds through parameterizations. If they are to be meaningful, from a geometrical viewpoint, they ought to be invariant under reparameterizations. Standard, local, integral functionals with this invariance property are well-known. We would like to focus though on the passage from a given arbitrary functional to its invariant realization or invariant hull through the use of inner-variations, much in the same way as with the convex or quasiconvex hulls of integrands in the vector Calculus of Variations. These two processes are, however, very different in nature. After examining some basic, interesting, general properties about the mutual relationship between a functional and its invariant realization, we deal with the one dimensional case to gain some initial familiarity with such a transformation and calculations, before proceeding to the higher dimensional situation. As one would anticipate, explicit computations in the latter are much harder to perform, if not impossible, as one is to work with vector variational problems. In particular, we are able to reach some  modest conclusion about the volume functional of a piece of a manifold in the general $N$-dimensional situation, especially in the two-dimensional case $N=2$. Various problems and conjectures are stated along the way. 
\end{abstract}
%%% BEGIN DOCUMENT

\maketitle

%\tableofcontents
{\bf Key Words.} Invariant hull, vector variational problem, minimal area.

\vspace{10pt} {\bf AMS(MOS) subject classifications.} 49Q10, 58E12, 49Q05.

\section{Introduction}
We would like to look at functionals 
$$
I(\bbu):W^{1, p}(\Omega; \R^m)\to\R
$$
where $\Omega\subset\R^N$ is some model domain, like the unit ball or square, and $W^{1, p}(\Omega; \R^m)$, $p\ge1$, is the standard Sobolev space. 
We want to focus on the case $N<m$, which can be appropriately labeled the geometric case, where feasible maps 
$$
\bbu\in W^{1, p}(\Omega; \R^m)
$$ 
can be interpreted as parameterizations of pieces of $N$-dimensional manifolds embedded or immersed in $\R^m$. These variational problems can also be declared as low-dimensional vector variational problem to stress the fact that $N<m$. 
One must insist in that such objects should be regular or geometric in the sense that the jacobian matrix 
$$
\nabla\bbu(\bbx)\in\R^{m\times N}
$$ 
should have maximum rank $N$ at every point $\bbx\in\Omega$,
and, in particular, induce a well-defined orientation of the associated manifold. 
%We will not do so, as this is not a main concern of us. Rather, 
It is important to turn our attention to reparameterizations of the same underlying geometric object $\bbu(\Omega)\subset\R^m$ for each $\bbu$. In fact, our emphasis is on functionals that are parameterization-invariant, as these would be, rightly so, the ones accepted for geometric analysis. 

It is well-known (\cite{giahil}, \cite{iwmaronn}) that local, integral functionals given by a density 
$$
W(\bu, \bU):\R^m\times\R^{m\times N},\quad I(\bbu)=\int_\Omega W(\bbu(\bbx), \nabla\bbu(\bbx))\,d\bbx,
$$
are invariant with respect to parametrizations if 
$$
W(\bu, \bU\F)=\det\F\, W(\bu, \bU),\quad (\bu, \bU)\in\R^m\times\R^{m\times N}, \F\in\R^{N\times N}, \det\F>0,
$$
which in turn is equivalent to the existence of a density
$$
w(\bu, \bV):\R^m\times\R^n\to\R,\quad n=\binom mN
$$
positively homogeneous of degree one on the variable $\bV$, such that
\begin{equation}\label{representation}
W(\bu, \bU)=w(\bu, \bU_1\wedge\bU_2\wedge\dots\wedge\bU_N),\quad \bU=\begin{pmatrix}\bU_1&\bU_2&\dots&\bU_N\end{pmatrix}.
\end{equation}
As a matter of fact, variational principles associated with the functional
$$
I(\bbu)=\int_\Omega w\left(\bbu, \frac{\partial\bbu}{\partial\bbx_1}\wedge\frac{\partial\bbu}{\partial\bbx_2}\wedge\dots\wedge\frac{\partial\bbu}{\partial\bbx_N}\right)\,d\bbx
$$
are indeed quite delicate. Just think about the particular example
\begin{equation}\label{minimals}
A(\bbu)=\int_\Omega \left|\frac{\partial\bbu}{\partial\bbx_1}\wedge\frac{\partial\bbu}{\partial\bbx_2}\right|\,d\bbx,\quad \Omega\subset\R^2, \bbu(\bbx):\Omega\to\R^3,
\end{equation}
whose minimizers, possibly under suitable boundary conditions, correspond to minimal area solutions, and the overwhelming amount of deep and fundamental work that has stirred over the years. 
We would like to highlight the important case in which $m=N+1$, $n=N+1$, and
$$
w(\bu, \bv):\R^{N+1}\times\R^{N+1}\to\R
$$ 
is positively-homogeneous of degree one in $\bv$. 

To be specific, though much more general situations may be considered, one fundamental problem is the following.
\begin{problem}\label{primeroo}
Find densities $w(\bv):\R^3\to\R$, positively-homogeneous of degree one, and suitable classes $\cU$ of maps in $H^1(\Omega; \R^3)$ where $\Omega\subset\R^2$ is a model domain, such that the variational problem
$$
\hbox{Minimize in }\bbu\in\cU:\quad \int_\Omega w\left(\frac{\partial\bbu}{\partial\bbx_1}(\bbx)\wedge\frac{\partial\bbu}{\partial\bbx_2}(\bbx)\right)\,d\bbx
$$
admits minimizers.
\end{problem}
As far as we can tell, the only situation where Problem \ref{primeroo} can be shown to be solvable is the classical minimal surface problem \eqref{minimals} for a suitable class $\cU$ leaving monotonically invariant the boundary $\partial\Omega$, and three different, prescribed points at the boundary $\partial\Omega$, unchanged. This is classical. Check the recent encyclopedic work \cite{dihilsau}, or the more classical reference \cite{giusti}.  As far as we can tell, the answer is not known even for $w$ the $p$th-norm of vectors in $\R^3$, $p\neq2$. The development of the BV theory of area-minimizing hypersurfaces is, in this regard, of paramount importance (see por instance \cite{delederoghi}, \cite{giusti}, \cite{harrisonpugh}, \cite{song}).

Our viewpoint is, on the one hand, broader in the sense that we would like to let more general functionals enter into our discussion; but on the other, it is less sophisticated and more straightforward than that in the  references at the end of the previous paragraph. Our basic driving principle has a two-fold motivation, which in simple, general terms can be stated as follows:
\begin{enumerate}
\item Starting from arbitrary functionals, one can produce, in a natural way, invariant, associated functionals (invariant hulls) by minimizing on suitable classes of changes of variables.
\item Minimizers for functionals and their invariant hulls must coincide over suitable classes of invariant mappings. 
\end{enumerate}
More specifically, our program proceeds in various steps.
\begin{enumerate}
\item Start with a typical local integral functional of the form 
$$
I(\bbu)=\int_\Omega W(\bbx, \bbu(\bbx), \nabla\bbu(\bbx))\,d\bbx,\quad \bbu\in W^{1, p}(\Omega; \R^m), \Omega\subset\R^N, N<m,
$$
or, for the sake of definiteness, focus on the situation
\begin{equation}\label{funcionala}
I(\bbu)=\int_\Omega W(\nabla\bbu(\bbx))\,d\bbx,\quad W(\bU):\R^{m\times N}\to\R,
\end{equation}
and eventually assume the necessary hypotheses on $W$ and the underlying class of competing maps $\bbu(\bbx)$, to ensure existence of minimizers.
\item We will define the invariant hull $I_i(\bbu)$ of the functional $I$, by minimizing on a suitable class of changes of variables $\bbx=\Phi(\bby)$ in $\Omega$
$$
I_i(\bbu)=\inf_{\Phi}\int_\Omega W(\nabla\bbu_{\Phi}(\bbx))\,d\bbx, \quad \bbu_\Phi(\bbx)=\bbu(\Phi^{-1}(\bbx)).
$$
This new functional $I_i$ is well-defined in most cases of interest, and, by construction, is parameterization-invariant. A different issue is if it admits a local, integral representation, and if so, if there are minimizers for it in the same class of mappings. We will see that this is not always so, in spite of the fact that the initial functional $I$ is of the standard form \eqref{funcionala}. 
\item At any rate, every minimizer of $I$ over a suitably restricted invariant subclass of $W^{1, p}(\Omega; \R^m)$, will be a minimizer of $I_i$ over the same class. 
\end{enumerate}

After going over some general material related to the passage from $I$ to $I_i$, and some interesting properties of $I_i$, we will focus on the one-dimensional case $N=1$ for which some explicit, simple computations are possible. This will serve as an initial interesting training ground to better appreciate the nature of the operation $I\mapsto I_i$, and to have at our disposal some simple explicit examples. Then we will focus on the much more difficult multidimensional case $N>1$ to treat the particularly interesting example 
\begin{equation}\label{especial}
I(\bbu)=\int_\Omega \frac1{N^{N/2}}|\nabla\bbu(\bbx)|^N\,d\bbx,
\end{equation}
and see how far we can go in finding its invariant realization $I_i$, depending on dimension $N$. Functionals of the form
$$
\int_\Omega W(\nabla\bbu(\bbx))\,d\bbx,\quad W(\F):\R^{m\times N}\to\R,
$$
where $W$ is homogeneous of degree $N$, seem particularly interesting beyond example \eqref{especial}. 
In general terms, for power functionals
$$
I_p(\bbu)=\int_\Omega|\nabla\bbu(\bbx)|^p\,d\bbx,\quad p>0,
$$
not much can be said at this point in the multidimensional situation. 

%Finally, we will make some observations for the general case. 
Note that by insisting in that $\Omega$ is a ball or a square, we are consequently limiting the discussion in some very fundamental way since the topological class of the image manifold is being restricted. 

\section{Some initial concepts and basic properties}
This section gathers a few elementary facts that do not require any particular form of functionals, and whose proofs are completely elementary; yet they are worth to bear in mind. 

\begin{definition}\label{invarianza}
\begin{enumerate}
\item We designate by $\cD=\cD(\Omega)$ the class of all smooth, positively-oriented ($\cC^\infty$)-diffeomorhisms $\Phi$ of $\overline\Omega$ onto itself. 
We also put $\cD_\id=\cD_\id(\Omega)$ for the subclass of $\cD$ of diffeomorphisms that are identical to the identity on $\partial\Omega$, and regard all possible invariant subclasses 
\begin{equation}\label{subclase}
\cD_\id\subset \cD_0\subset\cD
\end{equation}
in the sense
$$
\Phi_1\circ\Phi_2\in\cD_0\hbox{ if } \Phi_i\in\cD_0, i=1, 2.
$$
Obviously $\cD_\id$ and $\cD$ are invariant.
\item For an invariant subclass $\cD_0$ in \eqref{subclase}, we say that a collection of maps
$$
\cU\subset W^{1, p}(\Omega; \R^m)
$$
is invariant under $\cD_0$ if it is invariant under the action 
\begin{equation}\label{accion}
\bbu_\Phi(\bbx)=\bbu(\Phi^{-1}(\bbx)),
\end{equation}
that is to say $\bbu_\Phi\in\cU$ whenever $\bbu\in\cU$ and $\Phi\in\cD_0$.
%\eqref{accion}.
\item We say that a certain functional $I$ is invariant or geometric, if  
\begin{equation}\label{invarianza}
I(\bbu_\Phi)=I(\bbu),\quad \bbu\in W^{1, p}(\Omega; \R^m), \Phi\in\cD;
\end{equation}
and more specifically that it is invariant under a given invariant class $\cD_0$, if \eqref{invarianza} only holds for $\Phi\in\cD_0$.
\end{enumerate}
\end{definition}

Note the following. 
\begin{enumerate}
\item The condition 
$$
\Phi(\bbx)=\bbx,\quad \bbx\in\partial\Omega,
$$
is much more restrictive that the one accepted for $\cD$; in fact, elements of $\cD$ ought to maintain  $\partial\Omega$ invariant
$$
\Phi(\partial\Omega)=\partial\Omega,\quad \det\nabla\Phi(\bbx)>0\hbox{ a.e. in }\Omega.
$$
Sometimes boundary conditions associated with $\cD$ are called ``frictionless" (\cite{iwmaronn}, and references therein). 
Typically, subclasses $\cD_0$ in \eqref{subclase} are determined by specifying the subset of $\partial\Omega$ that is to be maintained unchanged by elements $\Phi$ of $\cD_0$, among possibly further restrictions. 
\item The map $\bbu_\Phi$ belongs to $W^{1, p}(\Omega; \R^m)$
whenever
$$
\bbu\in W^{1, p}(\Omega; \R^m), \Phi\in\cD.
$$
\item All elements in $\cD$ are legitimate changes of variables in $\Omega$.
\item Functionals are not assumed necessarily to be standard, local, integral functionals at this stage.
\end{enumerate}

Suppose we are given 
a general functional, not necessarily an integral functional, 
$$
I(\bbu):W^{1, p}(\Omega; \R^m)\to\R
$$ 
which is assumed to be well-defined over a class $\cU\subset W^{1, p}(\Omega; \R^m)$ of mappings, invariant with respect to some $\cD_0$. We will assume that such an invariant class $\cD_0$ has been appropriately selected, and refer to it in all of our manipulations when we simply use the term invariant. 

There is a natural way to produce invariant functionals from arbitrary examples by minimization over $\cD_0$
\begin{equation}\label{funcionali}
I_i(\bbu)=\inf_{\Phi\in\cD_0}I(\bbu_\Phi),
\end{equation}
where $\bbu_\Phi$ is given in \eqref{accion}. 

\begin{definition}
Given an arbitrary functional 
$$
I(\bbu):W^{1, p}(\Omega; \R^m)\to\R,
$$ 
we call 
$$
I_i(\bbu):W^{1, p}(\Omega; \R^m)\to\R
$$ 
in \eqref{funcionali}, its invariant realization with respect to $\cD_0$.
\end{definition}

It is elementary to realize that the invariant realization of every functional is invariant (under the same class $\cD_0$), and hence we have a mechanism to produce all of the possible invariant functionals. However, given that trivial (constant) functionals are obviously invariant, it may happen that sometimes the functional $I_i$ might turn out to be trivial even though $I$ could be quite meaningful. In the same vein, the passage $I\mapsto I_i$ may change the nature of the functional. For instance, $I$ could be an integral functional, but $I_i$ may not be so. 

We establish some of these basic properties formally for future reference. 

\begin{proposition}
For every functional $I:\cU\to\R$, its invariant or geometric version $I_i:\cU\to\R$ defined through \eqref{funcionali} is invariant, and $I_i\le I$. 
\end{proposition}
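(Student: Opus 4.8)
The plan is to verify the two assertions separately, both of which rest only on the group structure of $\cD_0$ and on the way the action \eqref{accion} composes. Neither requires $I$ to be of any special form, consistent with the remark preceding the statement.

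First I would dispose of the inequality $I_i\le I$. Since $\cD_\id\subset\cD_0$ and the identity map $\id$ belongs to $\cD_\id$, the identity is an admissible competitor in the infimum \eqref{funcionali}. Because $\bbu_\id=\bbu$ directly from \eqref{accion}, taking the infimum over a family that contains $\id$ yields
$$
I_i(\bbu)=\inf_{\Phi\in\cD_0}I(\bbu_\Phi)\le I(\bbu_\id)=I(\bbu),
$$
which is the desired bound for every $\bbu\in\cU$.

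For the invariance of $I_i$, the first step is to record how the action composes. A direct computation from \eqref{accion}, using $\Psi^{-1}\circ\Phi^{-1}=(\Phi\circ\Psi)^{-1}$, gives the cocycle identity $(\bbu_\Psi)_\Phi=\bbu_{\Phi\circ\Psi}$ for $\Phi,\Psi\in\cD_0$. Fixing $\Psi\in\cD_0$ and substituting this into the definition of $I_i$ then produces
$$
I_i(\bbu_\Psi)=\inf_{\Phi\in\cD_0}I\big((\bbu_\Psi)_\Phi\big)=\inf_{\Phi\in\cD_0}I\big(\bbu_{\Phi\circ\Psi}\big).
$$
The key step is to observe that, as $\Phi$ runs over $\cD_0$, the composed map $\Phi\circ\Psi$ runs over all of $\cD_0$; that is, right composition by the fixed $\Psi$ is a bijection of $\cD_0$ onto itself. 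The inclusion $\{\Phi\circ\Psi:\Phi\in\cD_0\}\subseteq\cD_0$ is exactly the closure under composition required of $\cD_0$ after \eqref{subclase}, while the reverse inclusion uses $\Psi^{-1}\in\cD_0$, since any $\Theta\in\cD_0$ can be written $\Theta=(\Theta\circ\Psi^{-1})\circ\Psi$ with $\Theta\circ\Psi^{-1}\in\cD_0$. Granting this, the last infimum is taken over precisely the same collection of maps $\bbu_\Theta$, $\Theta\in\cD_0$, as the one defining $I_i(\bbu)$, whence $I_i(\bbu_\Psi)=I_i(\bbu)$, i.e. $I_i$ is invariant under $\cD_0$ in the sense of \eqref{invarianza}.

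The only delicate point, and the one I would flag explicitly, is the appeal to $\Psi^{-1}\in\cD_0$: closure under composition alone yields merely the inclusion $\{\Phi\circ\Psi\}\subseteq\cD_0$ and hence only $I_i(\bbu_\Psi)\ge I_i(\bbu)$, so that $I_i$ would be guaranteed invariant only in a one-sided sense. To obtain the matching inequality one needs $\cD_0$ to be closed under inverses as well, i.e. to be a genuine subgroup of $\cD$, exactly as $\cD$ and $\cD_\id$ are. I would therefore state this subgroup property at the outset, since it is the structural hypothesis that makes the passage $I\mapsto I_i$ truly invariant rather than only \emph{super-invariant}.
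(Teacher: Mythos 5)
Your argument is correct, and it is worth noting that the paper offers no proof at all: immediately after the statement it says that ``this statement does not require a proof. It is a consequence of its own definition.'' So you have written out in full what the author treats as immediate. Your two steps are exactly the ones implicit in the definition: the bound $I_i\le I$ comes from testing the infimum in \eqref{funcionali} with $\id\in\cD_\id\subset\cD_0$, and the invariance comes from the cocycle identity $(\bbu_\Psi)_\Phi=\bbu_{\Phi\circ\Psi}$ together with the fact that right composition by a fixed $\Psi$ permutes $\cD_0$. Your flagged caveat is a genuine and worthwhile observation rather than pedantry: Definition \ref{invarianza} only asks that $\cD_0$ be closed under composition, and with that hypothesis alone your computation yields only $I_i(\bbu_\Psi)\ge I_i(\bbu)$, i.e.\ one-sided invariance. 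The full identity $I_i(\bbu_\Psi)=I_i(\bbu)$ does require $\Psi^{-1}\in\cD_0$, so the ``invariant subclasses'' should really be subgroups of $\cD$. In practice this costs nothing, since the classes the paper actually uses ($\cD$, $\cD_\id$, and the classes fixing a prescribed subset of $\partial\Omega$) are all closed under inverses; but you are right that the definition as written does not literally guarantee it, and making the subgroup hypothesis explicit is the cleaner formulation.
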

As indicated, this statement does not require a proof. It is a consequence of its own definition. The functional $I_i$ can be given a parallel characterization in terms of invariant functionals.

\begin{proposition}\label{supremo}
For $I:\cU\to\R$ as above, 
$$
I_i=\sup\{E:\cU\to\R: E\le I, E,\hbox{ invariant}\}.
$$
\end{proposition}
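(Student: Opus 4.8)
The plan is to establish the claimed identity by the standard two-sided argument for a supremum characterization, proving $I_i \le S$ and $S \le I_i$ separately, where I abbreviate $S := \sup\{E : \cU \to \R : E \le I,\ E \text{ invariant}\}$, the supremum being understood pointwise on $\cU$.

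For the inequality $I_i \le S$, I would simply observe that $I_i$ is itself a competitor in the family defining $S$: by the preceding proposition $I_i$ is invariant under the same class $\cD_0$ and satisfies $I_i \le I$. Since the pointwise supremum of a family dominates each of its members, $I_i \le S$ follows at once.

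The core of the argument is the reverse inequality $S \le I_i$, for which it suffices to show $E \le I_i$ for every invariant $E$ with $E \le I$. Fix such an $E$, a map $\bbu \in \cU$, and an arbitrary $\Phi \in \cD_0$. Invariance of $E$ gives $E(\bbu) = E(\bbu_\Phi)$, while $E \le I$ gives $E(\bbu_\Phi) \le I(\bbu_\Phi)$; chaining these yields $E(\bbu) \le I(\bbu_\Phi)$. Since this holds for every $\Phi \in \cD_0$, taking the infimum over $\Phi$ and recalling definition \eqref{funcionali} produces $E(\bbu) \le \inf_{\Phi \in \cD_0} I(\bbu_\Phi) = I_i(\bbu)$. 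As $\bbu$ was arbitrary, $E \le I_i$ pointwise, and passing to the supremum over all admissible $E$ gives $S \le I_i$.

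Combining the two inequalities yields $I_i = S$. The main, and only mild, subtlety lies in the reverse inequality: one must invoke the invariance of the competitor $E$ at exactly the step where the change of variables $\Phi$ acts on $\bbu$, so that the bound $E(\bbu) \le I(\bbu_\Phi)$ becomes uniform in $\Phi$ before the infimum is taken. No regularity or integrability hypotheses on $I$ are required, and the reasoning remains valid verbatim for extended-real-valued functionals, provided the infimum and supremum are interpreted in $[-\infty, +\infty]$.
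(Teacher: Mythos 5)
Your argument is correct and is essentially the paper's own proof: both directions match, with your reverse inequality simply unpacking the paper's one-line step ``$E=E_i\le I_i$'' into the explicit chain $E(\bbu)=E(\bbu_\Phi)\le I(\bbu_\Phi)$ followed by the infimum over $\Phi$. No gaps.
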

\begin{proof}
Put, for the time being, 
$$
I_g=\sup\{E:\cU\to\R: E\le I, E,\hbox{ invariant}\}.
$$
If $E$ is invariant and $E\le I$, then it is clear that $E=E_i\le I_i$, and so $I_g\le I_i$. Conversely, since $I_i$ is invariant and $I_i\le I$, we should also have $I_i\le I_g$. 
\end{proof}

\begin{corollary}\label{supinf}
Suppose that a functional $E$ is invariant, and $E\le I$. Assume that for each feasible $\bbu$ given, there is a sequence $\{\Phi_j\}\in\cD(\Omega)$ such that $I(\bbu_{\Phi_j})\to E(\bbu)$. Then $I_i\equiv E$, and $\{\Phi_j\}$ is minimizing for \eqref{funcionali}. 
\end{corollary}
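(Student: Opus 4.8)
The plan is to squeeze $I_i$ between $E$ and itself, combining the variational characterization in Proposition \ref{supremo} with the elementary lower bound coming directly from the defining infimum \eqref{funcionali}. First I would record the easy inequality: since $E$ is invariant and satisfies $E\le I$ by hypothesis, $E$ is one of the admissible competitors in the supremum appearing in Proposition \ref{supremo}; hence $E\le I_i$ pointwise on $\cU$.

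For the opposite inequality I would fix an arbitrary feasible $\bbu$ and exploit the approximating sequence $\{\Phi_j\}$. By the very definition of $I_i$ as an infimum over the invariant class, each reparameterization yields $I_i(\bbu)=\inf_\Phi I(\bbu_\Phi)\le I(\bbu_{\Phi_j})$ for every $j$. Passing to the limit $j\to\infty$ and invoking the assumed convergence $I(\bbu_{\Phi_j})\to E(\bbu)$ gives $I_i(\bbu)\le E(\bbu)$. Since $\bbu$ was arbitrary, $I_i\le E$ everywhere, and together with the first step this forces $I_i\equiv E$ on $\cU$.

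The final assertion is then immediate. With $I_i(\bbu)=E(\bbu)$ already established, the hypothesis $I(\bbu_{\Phi_j})\to E(\bbu)=I_i(\bbu)=\inf_\Phi I(\bbu_\Phi)$ says precisely that the values $I(\bbu_{\Phi_j})$ approach the infimum defining $I_i$, i.e. $\{\Phi_j\}$ is a minimizing sequence for \eqref{funcionali}.

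I do not anticipate any genuine obstacle: the result is a formal corollary of the sup/inf duality already set up in the preceding propositions, reducing to a two-line squeeze. The only point demanding care is the consistency of the class of diffeomorphisms. The infimum in \eqref{funcionali} is taken over the fixed invariant subclass $\cD_0$, so for the bound $I_i(\bbu)\le I(\bbu_{\Phi_j})$ to be legitimate one must read the sequence $\{\Phi_j\}$ as lying in $\cD_0$, with $E$ invariant under that same $\cD_0$; once this matching is fixed, the argument goes through verbatim.
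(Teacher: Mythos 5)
Your argument is correct and is essentially identical to the paper's own proof: the lower bound $E\le I_i$ comes from Proposition \ref{supremo} (invariance plus $E\le I$), the upper bound from passing to the limit in $I_i(\bbu)\le I(\bbu_{\Phi_j})$, and the minimizing-sequence claim follows at once. Your closing remark about reading $\{\Phi_j\}$ as lying in the same invariant class $\cD_0$ over which \eqref{funcionali} is taken is a fair observation about the statement's notation, but it does not change the argument.
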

\begin{proof}
The existence of the sequence $\{\Phi_j\}$, for arbitrary $\bbu$, with the claimed property, implies that $I_i\le E$. But, because $E$ is invariant and $E\le I$, by the preceding result, $I_i\ge E$. In particular, sequences $\{\Phi_j\}$, for each $\bbu$, become minimizing for \eqref{funcionali}.
\end{proof}

Our main basic result is concerned with the interplay between the optimization problems for both $I$ and $I_i$. 
\begin{proposition}\label{existencia}
We always have
$$
\inf_{\bbu\in\cU}I(\bbu)=\inf_{\bbu\in\cU}I_i(\bbu).
$$
Moreover, if we know that
$$
\inf_{\bbu\in\cU}I(\bbu)=\min_{\bbu\in\cU}I(\bbu)=I(\bbu_0),\quad \bbu_0\in\cU,
$$
then $\bbu_0$ becomes a minimizer for $I_i$ in $\cU$ as well
$$
I_i(\bbu_0)=\min_{\bbu\in\cU}I_i(\bbu).
$$
\end{proposition}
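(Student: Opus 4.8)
The plan is to deduce both assertions purely from two facts already in hand: the pointwise bound $I_i\le I$ from the preceding proposition, and the assumed invariance of the competing class $\cU$ under $\cD_0$. The whole argument reduces to a pair of sandwiching inequalities, so no analytic machinery is needed.

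First I would establish the equality of infima. One direction is immediate: since $I_i(\bbu)\le I(\bbu)$ for every $\bbu\in\cU$, taking the infimum over $\cU$ gives $\inf_{\bbu\in\cU}I_i(\bbu)\le\inf_{\bbu\in\cU}I(\bbu)$. For the reverse direction I would exploit the invariance of $\cU$: fix any $\bbu\in\cU$; for each $\Phi\in\cD_0$ the reparameterization $\bbu_\Phi$ lies again in $\cU$, hence $I(\bbu_\Phi)\ge\inf_{\bbv\in\cU}I(\bbv)$. Taking the infimum over $\Phi\in\cD_0$ in this inequality yields $I_i(\bbu)\ge\inf_{\bbv\in\cU}I(\bbv)$, and since this holds for every $\bbu$, also $\inf_{\bbu\in\cU}I_i(\bbu)\ge\inf_{\bbv\in\cU}I(\bbv)$. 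The two bounds combine into the claimed equality of infima.

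For the second assertion, set $m=\min_{\bbu\in\cU}I(\bbu)=I(\bbu_0)$. Applying $I_i\le I$ at $\bbu_0$ gives $I_i(\bbu_0)\le I(\bbu_0)=m$, while the bound $I_i(\bbu)\ge\inf_{\bbv\in\cU}I(\bbv)=m$ established above, read at $\bbu=\bbu_0$, gives $I_i(\bbu_0)\ge m$. Hence $I_i(\bbu_0)=m$, which by the first part equals $\inf_{\bbu\in\cU}I_i(\bbu)$; the infimum of $I_i$ is therefore attained at $\bbu_0$, so $\bbu_0$ minimizes $I_i$ over $\cU$ as well.

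The only point requiring any care---and the place where the result could fail if the hypotheses were weakened---is the reverse inequality in the first step. It rests entirely on $\cU$ being invariant under $\cD_0$, which guarantees that each $\bbu_\Phi$ remains an admissible competitor when the infimum defining $I_i$ is taken. If $\cU$ were not invariant, the maps $\bbu_\Phi$ could leave the class and the bound $I(\bbu_\Phi)\ge\inf_{\bbv\in\cU}I(\bbv)$ would no longer be available; so I would take care to invoke that invariance hypothesis explicitly at exactly that step.
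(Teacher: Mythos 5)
Your proof is correct and follows essentially the same route as the paper: the equality of infima comes from combining $I_i\le I$ with the invariance of $\cU$ (the paper compresses this into the single identity $\inf_{\bbu}\inf_{\Phi}I(\bbu_\Phi)=\inf_{\bbu}I(\bbu)$), and the second assertion is the same sandwich $I_i(\bbu_0)\le I(\bbu_0)=\inf_\cU I=\inf_\cU I_i\le I_i(\bbu_0)$. Your explicit remark on where invariance of $\cU$ is used is a useful clarification but not a departure from the paper's argument.
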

\begin{proof}
It is elementary to realize that
$$
\inf_{\bbu\in\cU}I_i(\bbu)=\inf_{\bbu\in\cU}\inf_{\Phi\in\cD}I(\bbu_\Phi)=\inf_{\bbu\in\cU}I(\bbu),
$$
since $\cU$ is assumed to be invariant. If the infimum of $I$ over $\cU$ is attained at some $\bbu_0\in\cU$, then
$$
I_i(\bbu_0)\le I(\bbu_0)=\inf_{\bbu\in\cU}I(\bbu)=\inf_{\bbu\in\cU}I_i(\bbu)\le I_i(\bbu_0),
$$
and our conclusion follows.
\end{proof}
This proposition is the principal method to tackle the existence of minimizers for geometric variational principles. It somehow amounts to the inverse process of the one we are describing here.

\begin{problem}
Given a relevant invariant functional $I$, defined on a certain family $\cM$ of manifolds, find another one $\cI$, convex and coercive in such a way that admits minimizers $\bbu$ in $\cM$, designed so that $\cI_i=I$. In this case $\bbu$ becomes a minimizer for $I$ in $\cM$ as well. 
\end{problem}

Another elementary, interesting consequence lets determine many other invariant realizations. 
\begin{proposition}\label{intermedio}
Suppose we have $E\le I$, and $I_i\le E$. Then $E_i=I_i$.
\end{proposition}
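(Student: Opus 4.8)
The plan is to reduce everything to two elementary properties of the passage $F\mapsto F_i$. The first is \emph{monotonicity}: whenever $F\le G$ as functionals on $\cU$, one has $F_i\le G_i$. This is immediate from the definition \eqref{funcionali}, since the pointwise inequality $F(\bbu_\Phi)\le G(\bbu_\Phi)$ holds for every admissible $\Phi$, and taking the infimum over $\Phi\in\cD_0$ preserves it. The second is that an \emph{invariant functional is a fixed point} of the operation: if $E$ is invariant then $E_i=E$, which is precisely the remark $E=E_i$ already employed in the proof of Proposition \ref{supremo}. In particular, since $I_i$ is itself invariant (by the first, unlabeled Proposition of this section), we have $(I_i)_i=I_i$.

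With these two facts in hand I would simply chain the hypotheses. Read together, the two assumptions $E\le I$ and $I_i\le E$ form the sandwich $I_i\le E\le I$. Applying the monotone operation $(\cdot)_i$ to the right-hand inequality $E\le I$ gives $E_i\le I_i$. Applying it to the left-hand inequality $I_i\le E$ gives $(I_i)_i\le E_i$, and invoking the fixed-point property $(I_i)_i=I_i$ this becomes $I_i\le E_i$. Combining the two resulting inequalities yields $E_i=I_i$, as claimed.

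I do not expect any genuine obstacle here, as the argument is purely a matter of assembling elementary inequalities in the correct order. The only points requiring (brief) justification are the monotonicity of $(\cdot)_i$ and the invariance-implies-fixed-point statement; both follow directly from the definition \eqref{funcionali} and from material already established earlier in the section, so no real difficulty arises. If one wished to emphasize the structural meaning, one could note that the content of the Proposition is exactly that $(\cdot)_i$ acts as a closure operator whose fixed points are the invariant functionals, so that any $E$ trapped between $I_i$ and $I$ necessarily has the same closure as $I$.
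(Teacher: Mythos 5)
Your proof is correct. The paper states this proposition without proof (calling it an elementary consequence), and your argument via monotonicity of $(\cdot)_i$ together with the fixed-point property $(I_i)_i=I_i$ — equivalently, sandwiching $I_i\le E\le I$ and applying Proposition \ref{supremo} — is exactly the intended elementary reasoning.
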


\section{The one-dimensional case: curves}\label{onedi}
For the sake of illustration and to gain some insight on the nature of the passage from a given arbitrary functional to its invariant realization, we pay attention in this section to the one-dimensional case, i.e. to the case of curves in $\R^m$. In this situation $\Omega$ will be taken to be the unit interval. We expect that in this situation some explicit calculations may be possible. 

We will always start with a typical integral functional 
\begin{equation}\label{basicouno}
I(\bu)=\int_0^1 W(\bu'(t))\,dt,
\end{equation}
depending on curves 
$$
\bu(t):[0, 1]\to\R^m,\quad  W(\bv):\R^m\to\R
$$
and with an integrand just depending on the derivative $\bv=\bu'$, and see how to calculate its invariant realization. Note that in this case, after a natural change of variables, we formally find
$$
\int_0^1 W(\bu'_\phi(t))\,dt=\int_0^1 \phi'(t)\,W\left(\frac1{\phi'(t)}\bu'(t)\right)\,dt.
$$
For each fixed path $\bU(=\bu')$, we need to face the optimization problem
$$
\hbox{Minimize in }\phi:\quad \int_0^1 \phi'(t)W\left(\frac1{\phi'(t)}\bU(t)\right)\,dt
$$
subjected to constraints
\begin{equation}\label{condiciones1d}
\phi(0)=0, \phi(1)=1, \quad \phi'>0.
\end{equation}
We realize that the existence of a minimizer for such a problem in an appropriate Sobolev class depends on the following elementary concept. 
\begin{definition}
A continuous function 
$$
W(\bv):\R^m\to\R
$$ 
is said to be radially convex and smooth if the sections
\begin{equation}\label{secciones}
r\in(0, \infty)\mapsto  r\,W\left(\frac1r\bx\right),\quad \bx\in\R^m\setminus\{\bcero\},
\end{equation}
as functions of the single variable $r$, are (strictly) convex and smooth for all such $\bx$.
\end{definition}
A basic result follows.

\begin{theorem}\label{dimuno}
Suppose that the function 
$$
W(\bv):\R^m\to\R
$$ 
is radially strictly convex and smooth, and let $I(\bu)$ be the corresponding functional \eqref{basicouno}. Then
$$
I_i(\bu)=\int_0^1\bbf(c(\bu), \bu'(t))\,W\left(\frac1{\bbf(c(\bu), \bu'(t))}\bu'(t)\right)\,dt
$$
where the function $\bbf(c, \bx)$ is the inverse of
$$
r\in(0, \infty)\mapsto \bbg(r, \bx)\equiv W\left(\frac1r\bx\right)-\frac1r\nabla W\left(\frac1r\bx\right)\cdot\bx,
$$
for fixed $\bx$, and the (constant) functional $c=c(\bu)$ is determined through the condition
\begin{equation}\label{condicion}
\int_0^1\bbf(c(\bu), \bu'(t))\,dt=1.
\end{equation}
\end{theorem}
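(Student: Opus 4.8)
The plan is to reduce the minimization defining $I_i(\bu)$ to a one-parameter family of pointwise convex problems tied together by a single integral constraint, and then to solve it by the standard convexity (Lagrange multiplier) argument. Writing $\Phi=\phi$ for a one-dimensional diffeomorphism and using the change-of-variables identity already displayed, the problem becomes
\begin{equation*}
I_i(\bu) = \inf\left\{ \int_0^1 r(t)\, W\!\left(\tfrac{1}{r(t)}\bu'(t)\right) dt : r > 0,\ \int_0^1 r(t)\, dt = 1 \right\},
\end{equation*}
where $r=\phi'$ and the constraint $\int_0^1 r = \phi(1)-\phi(0)=1$ encodes the boundary conditions \eqref{condiciones1d}. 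The integrand $r\mapsto r\,W(\bx/r)$ is exactly the radial section \eqref{secciones}, hence strictly convex and smooth in $r$ for each fixed $\bx=\bu'(t)\neq\bcero$.

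First I would differentiate this section in $r$ and recognize the result as $\bbg$. A direct calculation gives
\begin{equation*}
\frac{\partial}{\partial r}\left[ r\, W\!\left(\tfrac{1}{r}\bx\right) \right] = W\!\left(\tfrac1r\bx\right) - \frac1r \nabla W\!\left(\tfrac1r\bx\right)\cdot\bx = \bbg(r,\bx).
\end{equation*}
Strict radial convexity says $\bbg(\cdot,\bx)$ is strictly increasing in $r$ for each fixed $\bx$, so it is invertible on its range; its inverse is, by definition, $\bbf(c,\bx)$, which is likewise strictly increasing in $c$. Consequently the unconstrained pointwise minimizer of $r\,W(\bx/r)-c\,r$ is precisely $r=\bbf(c,\bx)$.

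Next I would produce the multiplier $c$. Since $c\mapsto\bbf(c,\bx)$ is continuous and strictly increasing, the map $c\mapsto\int_0^1\bbf(c,\bu'(t))\,dt$ is continuous and strictly increasing, so the calibrating equation \eqref{condicion} has at most one solution $c=c(\bu)$, and exactly one provided $1$ lies in the range of this map. Setting $r^*(t)=\bbf(c(\bu),\bu'(t))$, this $r^*$ is admissible by construction. Minimality is then immediate from convexity: for any admissible $r$ the subgradient inequality at $r^*$ gives, pointwise,
\begin{equation*}
r\, W\!\left(\tfrac1r\bu'\right) \ge r^*\, W\!\left(\tfrac{1}{r^*}\bu'\right) + \bbg(r^*,\bu')\,(r - r^*) = r^*\, W\!\left(\tfrac{1}{r^*}\bu'\right) + c(\bu)\,(r - r^*),
\end{equation*}
and upon integrating over $(0,1)$ the linear term drops out because $\int_0^1 r = \int_0^1 r^* = 1$. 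Substituting $r^*$ back into the objective then yields exactly the claimed expression for $I_i(\bu)$.

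The main obstacle I anticipate is twofold, and both parts concern attainment rather than the formal Euler--Lagrange computation. First, one must guarantee that the multiplier $c(\bu)$ actually exists, i.e. that $1$ lies in the range of $c\mapsto\int_0^1\bbf(c,\bu')\,dt$; this requires controlling the behaviour of the radial sections as $r\to 0^+$ and $r\to\infty$ (equivalently, the growth of $W$), which is where the smoothness and strict convexity hypotheses do the real work. Second, the optimal $r^*$ need only be positive and integrable, whereas $\cD_0$ consists of genuine orientation-preserving diffeomorphisms $\phi$ with $\phi'=r^*$; one must therefore check that the infimum over this smaller admissible class is not lowered, by approximating $r^*$ with smooth positive densities of unit integral and invoking continuity of the objective. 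Everything else --- the convexity inequality, the identification of $\bbg$, and the uniqueness of $c$ --- is routine.
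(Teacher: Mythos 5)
Your proposal is correct and follows essentially the same route as the paper: identify $\bbg(r,\bx)$ as the $r$-derivative of the radial section $r\,W(\bx/r)$, use strict radial convexity to invert it, impose the first-order (Euler--Lagrange) condition $\bbg(\phi',\bu')=c$ with $c$ fixed by the normalization $\int_0^1\phi'=1$, and conclude minimality by convexity. You merely make explicit two points the paper leaves implicit --- the subgradient inequality showing the critical point is a global minimizer, and the attainment caveats about the existence of the multiplier $c(\bu)$ and the approximation of $r^*$ by derivatives of genuine diffeomorphisms --- which is a welcome sharpening but not a different argument.
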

\begin{proof}
According to our definition of $I_i$, we need to examine, for each feasible path $\bu(t)$, the variational problem
$$
\hbox{Minimize in }\phi(t): \int_0^1 W\left(\frac1{\phi'(t)}\bu'(t)\right)\phi'(t)\,dt
$$
under constraints \eqref{condiciones1d}. We do not know a priori if there is a minimizer for such a problem based on the direct method, since we cannot count on coercivity. 
We can, however, examine the corresponding Euler-Lagrange equation and check if it admits a solution; and realize afterwards, through convexity in the standard manner, that it is a minimizer of the problem. 

We are thus led, taking into account the notation above, to the boundary value problem
$$
-[\bbg(\phi'(t), \bu'(t))]'=0\hbox{ in }(0, 1),\quad \phi(0)=0, \phi(1)=1.
$$
We conclude that 
$$
\bbg(\phi'(t), \bu'(t))=c(\bu)\quad\hbox{ or } \quad\bbf(c(\bu), \bu'(t))=\phi'(t),
$$
with $c=c(\bu)$, a constant (with respect to $t$). We note that for each fixed vector $\bx$, the function 
$$
r\mapsto\bbg(r, \bx)
$$ 
has a non-negative derivative due to the radial strict convexity of $W$, and so it is monotone (for positive $r$). After all, $\bbg(r, \bx)$ is the first derivative of the sections in \eqref{secciones}. 
All that remains to argue is that the functional $c(\bu)$ is to be determined by the condition
$$
\int_0^1\bbf(c(\bu), \bu'(t))\,dt=\int_0^1 \phi'(t)\,dt=1.
$$
\end{proof}

With the help of this theorem, we can perform some interesting explicit calculations. We will put
$$
\overline W(\bx, r)=r\, W\left(\frac1r\bx\right),\quad \bx\in\R^m, r>0,
$$
and, for a fixed path $\bu$, 
$$
w(t, r)=r W\left(\frac1r\bu'(t)\right),
$$
so that we are interested in finding the minimum of the variational problem
$$
\hbox{Minimize in }\phi(t):\quad \int_0^1 w(t, \phi'(t))\,dt
$$
under 
$$
\phi(0)=0, \phi(1)=1,\quad \phi'>0.
$$ 

We explore next some explicit examples.

\subsection{Some explicit examples}
The first example is mandatory
$$
W(\bu):\R^m\to\R,\quad W(\bu)=|\bu|.
$$
In this case
$$
\overline W(\bx, r)=W(\bx)
$$
for all $\bx\in\R^m$ and $r>0$, and consequently
$$
w(t, r)=W(\bu'(t))
$$
is independent of $r$. The corresponding variational problem becomes trivial since
$$
\int_0^1w(t, \phi'(t))\,dt=\int_0^1|\bu'(t)|\,dt
$$
for every feasible $\phi$. This is not surprising since, after all, the initial variational problem yields the length of the image curve, and this is invariant under reparameterizations. Note that exactly the same calculations hold for every function $W(\bu)$ which is positively-homogeneous of degree one. 

More interesting is the quadratic case
$$
W(\bu):\R^m\to\R,\quad W(\bu)=\frac12|\bu|^2,
$$
which is radially coercive, and strictly convex. According to Theorem \ref{dimuno}, we should care about the inverse of the function
$$
\bbg(r, \bx)=\frac1{2r^2}|\bx|^2-\frac1{r^2}|\bx|^2=-\frac1{2r^2}|\bx|^2,
$$
with respect to the variable $r$ for fixed $\bx$, which is
$$
\bbf(c, \bx)=\frac{|\bx|}{\sqrt{-2c}}. 
$$
Condition \eqref{condicion} becomes
$$
2c=-\left(\int_0^1|\bu'(t)|\,dt\right)^2,\quad \bbf(c(\bu), \bu')=\frac1{\int_0^1|\bu'(t)|\,dt}|\bu'|,
$$
and the corresponding invariant realization, after a few manipulations, becomes
$$
I_i(\bu)=\frac12\left(\int_0^1|\bu'(t)|\,dt\right)^2.
$$
Note that this is not a typical integral functional despite the fact that the initial one was.

Let us explore now
$$
W(\bu)=\frac1p|\bu|^p,\quad 0<p,
$$
and distinguish the two ranges $0<p<1$, $1<p$. The second case is similar to the quadratic one and one finds
$$
I_i(\bu)=\frac1p\left(\int_0^1|\bu'(t)|\,dt\right)^p.
$$
The case $0<p<1$ is drastically distinct, because
$$
w(t, r)=r^{1-p}|\bu'(t)|^p,
$$
is no longer convex, and yet for every smooth path $\bu$, the infimum of
$$
\int_0^1 \psi'(t)^{1-p}\,|\bu'(t)|^p\,dt.
$$
in $\psi$ vanishes. It suffices to take the sequence $\psi_j(t)=t^{j+1}$. Computations are elementary. In this case, the invariant realization $I_i(\bu)\equiv0$ is trivial. 

An additional dependence of $W$ on $\bu$ is essentially the same as above, i.e.
$$
W=W(\bu(t), \bu'(t)).
$$
In this case, for fixed $\bu(t)$, the optimization problem in changes of variables $\phi(t)$ would be
$$
\hbox{Minimize in }\phi(t):\quad \int_0^1 W\left(\bu(t), \frac1{\phi'(t)}\bu'(t)\right)\phi'(t)\,dt
$$
under $\phi(0)=0$, $\phi(1)=1$, and both the functions 
$$
\bbg(r, \by, \bx)=W\left(\by, \frac1r\bx\right)-\frac1r\nabla_\bx W\left(\by, \frac1r\bx\right)\cdot\bx
$$
and its inverse $\bbf(r, \by, \bx)$ with respect to $r$ would show an additional dependence on the variable $\by$. But other than this point, everything else is just like the previous situation. 

\section{The local, integral case in higher dimension}
We would like to explore the situation in which we start out with a typical integral functional
\begin{equation}\label{primero}
I(\bu)=\int_\Omega W(\nabla\bu(\bbx))\,d\bbx
\end{equation}
for a certain, continuous integrand
$$
W(\F):\R^{m\times N}\to\R,
$$
and invariant class $\cD_0$ as in Definition \ref{invarianza}.
To begin with, we will assume that feasible maps 
$$
\bu(\bbx):\Omega\subset\R^N\to\R^m,\quad N<m,
$$
are smooth. Let $\Phi\in\cD_0$. It is elementary to argue that
$$
\int_\Omega W(\nabla\bu_\Phi(\bby))\,d\bby=\int_\Omega \det\nabla\Phi(\bbx) W\left(\frac1{\det\nabla\Phi(\bbx)}\nabla\bu(\bbx)\adj\nabla\Phi(\bbx)^T\right)\,d\bbx,
$$
through the change of variables $\bbx=\Phi^{-1}(\bby)$ and the standard formula
$$
\X^{-1}=\frac1{\det\X}\adj\X^T,
$$
valid for non-singular, square matrices, 
and so
\begin{equation}\label{invarianthull}
I_i(\bbu)=\inf_{\Phi\in\cD_0}\int_\Omega \overline W(\nabla\Phi(\bbx), \nabla\bu(\bbx))\,d\bbx,
\end{equation}
for
\begin{equation}\label{dosvar}
\overline W(\X, \F)=\det\X\, W\left(\F\X^{-1}\right)=\det\X\, W\left(\frac1{\det\X}\F\adj\X^T\right).
\end{equation}

In this way, we clearly see that computing $I_i(\bu)$ amounts to solving a non-homogeneous, vector, variational problem of the same kind considered in hyper-elasticity (\cite{ciarlet}) for an inhomogeneous integrand of the form
$$
\tilde W(\bbx, \X)=\overline W(\X, \nabla\bu(\bbx)),
$$
under suitable boundary conditions depending on the invariant class $\cD_0$ considered. 
%except possibly for the ``slip"-type boundary condition recorded in Definition \ref{invarianza}. 
Since variations of the form given in \eqref{accion} are classic inner-variations for the functional $I$, it is natural to introduce the following (\cite{pedregalelas}).
\begin{definition}\label{inner}
Let
$$
W(\F):\R^{m\times N}\to\R
$$ 
be a continuous integrand. 
\begin{enumerate}
\item $W$ is said to be  inner-quasiaffine if the corresponding $\overline W(\X, \F)$ in \eqref{dosvar} is quasi-affine in $\X$ for every fixed $\F$.
\item $W$ is said to be inner-quasiconvex if
\begin{equation}\label{modw}
\overline W(\X, \F)=\begin{cases}\det\X W(\F\X^{-1}),& \det\X>0,\\+\infty,& \det\X\le0,\end{cases}
\end{equation}
is quasiconvex in $\X$ for every fixed $\F$.
\item $W$ is said to be inner-polyconvex if $\overline W(\X, \F)$ is polyconvex in $\X$ for every fixed $\F$.
\end{enumerate}
\end{definition}
Notice that if elements of $\cD_0$ would be demanded to comply with the boundary condition $\Phi=\id$ on $\partial\Omega$, then we could have defined $W$ as inner-quasiconvex if $\overline W(\X, \F)$ is quasiconvex at $\id$ in $\X$ for every $\F$.  

The following basic fact justifies, to some extent, the previous definition.
\begin{proposition}
Assume the integrand $W$ for $I$ in \eqref{primero} is quasiconvex (in the usual, vector sense), respectively quasi-affine. Then $W$ must be inner-quasiconvex, respectively inner-quasiaffine.
\end{proposition}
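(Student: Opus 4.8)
The plan is to verify Morrey's integral inequality for $g(\X):=\overline{W}(\X,\F)$ at each fixed $\F$ and each base point $\X_0$, reducing it by a change of variables to the quasiconvexity of $W$ that is already assumed. Fix $\F\in\R^{m\times N}$ and $\X_0\in\R^{N\times N}$. If $\det\X_0\le0$ there is nothing to prove, since the extension \eqref{modw} forces $g(\X_0)=+\infty$ and the competing integral is then no smaller; so assume $\det\X_0>0$. Because quasiconvexity is independent of the underlying domain, I am free to test the inequality on any convenient bounded open set $D$, and I must establish
\[
\frac1{|D|}\int_D \overline{W}(\X_0+\nabla\psi(\bbx),\F)\,d\bbx\ \ge\ \overline{W}(\X_0,\F)
\]
for every admissible perturbation $\psi$ vanishing on $\partial D$.

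The core step is to recognize the left-hand side as an \emph{outer}-variation energy for $W$. Set $\Phi(\bbx)=\X_0\bbx+\psi(\bbx)$, so that $\nabla\Phi=\X_0+\nabla\psi$ and $\Phi=\X_0\,\cdot$ on $\partial D$. Assuming first that $\Phi$ is an orientation-preserving diffeomorphism of $D$ onto $\X_0 D$ (the genuine difficulty, addressed below), the change of variables $\bby=\Phi(\bbx)$, together with definition \eqref{dosvar} and the identity $\nabla(\Phi^{-1})(\bby)=\big(\nabla\Phi(\Phi^{-1}(\bby))\big)^{-1}$, yields
\[
\int_D \overline{W}(\nabla\Phi,\F)\,d\bbx=\int_D \det\nabla\Phi\; W\big(\F\,(\nabla\Phi)^{-1}\big)\,d\bbx=\int_{\X_0 D} W(\nabla\bw(\bby))\,d\bby,\qquad \bw:=\F\,\Phi^{-1}.
\]
Since $\Phi=\X_0\,\cdot$ on $\partial D$, we have $\bw(\bby)=\F\X_0^{-1}\bby$ on $\partial(\X_0 D)$, so $\bw$ is an admissible competitor for the quasiconvexity of $W$ based at the matrix $\F\X_0^{-1}\in\R^{m\times N}$ on the domain $\X_0 D$.

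Applying the assumed quasiconvexity of $W$ at $\F\X_0^{-1}$ then gives
\[
\int_{\X_0 D} W(\nabla\bw)\,d\bby\ \ge\ |\X_0 D|\,W\big(\F\X_0^{-1}\big)=\det\X_0\,|D|\,W\big(\F\X_0^{-1}\big)=|D|\,\overline{W}(\X_0,\F),
\]
which is exactly the desired inequality after dividing by $|D|$. For the quasi-affine case one runs the identical computation for both $W$ and $-W$, so that $\overline{W}(\cdot,\F)$ is simultaneously quasiconvex and quasiconcave, hence quasi-affine.

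The main obstacle is precisely the diffeomorphism hypothesis on $\Phi$: Morrey's condition must be tested against \emph{all} perturbations $\psi$, and a large $\psi$ need not render $\Phi$ injective, in which case $\bw=\F\,\Phi^{-1}$ is not even defined. I expect to dispose of this as follows. Perturbations with $\det(\X_0+\nabla\psi)\le0$ on a set of positive measure are harmless, because \eqref{modw} makes the left integral $+\infty$ and the inequality is automatic. For the remaining orientation-preserving competitors one either invokes the density of injective (bi-Lipschitz) inner variations carrying the prescribed affine boundary values $\X_0\,\cdot$, or replaces the naive change of variables by the area formula with topological degree, using that $\Phi=\X_0\,\cdot$ on $\partial D$ pins the degree to $\mathbf 1_{\X_0 D}$. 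Turning the formal diffeomorphic computation above into a statement valid for the full Morrey test class is the one delicate point of the argument.
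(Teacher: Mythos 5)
Your proposal is correct and follows essentially the same route as the paper: change variables to rewrite the inner-variation integral as $\int_{\X_0 D}W(\nabla(\F\Phi^{-1}))\,d\bby$ and apply the quasiconvexity of $W$ at $\F\X_0^{-1}$, with the quasi-affine case handled by running the argument for $\pm W$. The injectivity issue you flag as the delicate point is resolved in the paper exactly as you suggest, by invoking the global invertibility results of Ball, Pourciau and Ciarlet for orientation-preserving maps with injective (affine) boundary values.
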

\begin{proof}
Let $\F, \X$ be given constant $m\times N$-,  and  $N\times N$-matrices, respectively, with $\det\X>0$. Let 
$$
\Phi(\bby): \Omega\to\R^N,\quad \det(\X+\nabla\Phi(\bby))>0\hbox{ a.e. }\bby\in\Omega,
$$ 
be a smooth, compactly-supported map . Then
$$
\bbx=\bbu^{-1}(\bby)=\X\bby+\Phi(\bby)
$$
is a valid change of variables by the well-known results on injectivity in \cite{ball}, \cite{pourciau}, or \cite{ciarlet}, and the integral
$$
\int_\Omega \overline W(\X+\nabla\Phi(\bby), \F)\,d\bby=\int_\Omega\det(X+\nabla\Phi(\bby))W\left(\F(\X+\nabla\Phi(\bby))^{-1}\right)\,d\bby
$$
is written as
$$
\int_{\X(\Omega)} W(\F\nabla\bbu(\bbx))\,d\bbx.
$$
Because $\det\X>0$, the image domain $\X(\Omega)$ is also a regular domain. On the other hand, the map
$$
\bbU(\bbx)=\F\bbu(\bbx)
$$
is Lipschitz and such that 
$$
\left.\bbU\right|_{\partial\Omega}=\left.\F\X^{-1}\bbx\right|_{\partial\Omega},
$$
and, hence, by quasiconvexity,
\begin{align}
\int_{\X(\Omega)} W(\F\nabla\bbu(\bbx))\,d\bbx=&\int_{\X(\Omega)} W(\nabla\bbU(\bbx))\,d\bbx\nonumber\\
\ge&|\X(\Omega)| W(\F\X^{-1})\nonumber\\
=&|\Omega|\det\X W(\F \X^{-1})\nonumber\\
=&|\Omega|\overline W(\X, \F).\nonumber
\end{align}
Putting everything together, we see that 
$$
\int_\Omega \overline W(\X+\nabla\Phi(\bby), \F)\,d\bby\ge |\Omega|\overline W(\X, \F),
$$
as desired.
\end{proof}

Concerning the  third part of Definition \ref{inner} about poly-convexity, check \cite{pedregal}.

Even if one could count on the quasiconvexity (or even the convexity) of the initial integrand $W$, it is not possible, in general, to deduce the existence of minimizers for \eqref{invarianthull} because one would need, in addition, the coercivity property for the integrand in \eqref{dosvar} in $\X$ (for every fixed $\F$). Even if we take
$W(\F)=\frac12|\F|^2$ in \eqref{primero}, the corresponding density in \eqref{dosvar} could be bounded from below by a quantity of the form, except for a constant, 
$$
\frac1{\det\X}|\X|^2.
$$
Even if this function is homogeneous of degree zero (and polyconvex \cite{iwmaronn}, see below), it is non-coercive, because there are matrices of norm one and determinant arbitrarily small. Note how we cannot follow the path used for the one-dimensional situation in Section \ref{onedi} based on smoothness, as we would need an independent way to show existence of solutions of the corresponding Euler-Lagrange equation, and then rely on convexity. There is no hope that such a procedure could be successful in a higher-dimensional framework.

\subsection{Another form of invariance}\label{estrella}
As indicated, the variational problem \eqref{invarianthull} for each fixed $\bu_0$ may be difficult to deal with. Yet, despite this fact, the functional defined through that problem turns out to be invariant. 

There is another simpler form to define an invariant density 
$$
W_i(\F):\R^{m\times N}\to\R
$$ 
from a given 
$$
W(\F):\R^{m\times N}\to\R,
$$ 
by simply taking the pointwise minimum in the matrix $\X$, namely
\begin{equation}\label{integrandoi}
W_i(\F)=\inf_{\X\in\R^{N\times N}_+} \det\X\, W\left(\frac1{\det\X}\F\adj\X^T\right),
\end{equation}
where we are putting 
$$
\R^{N\times N}_+=\{\X\in\R^{N\times N}: \det\X>0\}.
$$
Consider the two functionals
\begin{equation}\label{dosfuncionales}
I(\bu)=\int_\Omega W(\nabla \bu(\bbx))\,d\bbx,\quad I_i^*(\bu)=\int_\Omega W_i(\nabla\bu(\bbx))\,d\bbx.
\end{equation}

\begin{proposition}\label{minimo}
This local integral functional $I_i^*$ is invariant for every class $\cD_0$ in Definition \ref{invarianza}, and
$$
I_i^*\le I_i\le I,
$$
where $I_i$ is given in \eqref{invarianthull}. 
\end{proposition}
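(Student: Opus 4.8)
The plan is to isolate the one algebraic identity that makes $W_i$ behave like an invariant density, and then read off both the invariance of $I_i^*$ and the two inequalities as consequences. The key step I would carry out first is to show that $W_i$ inherits the multiplicative homogeneity characteristic of parameterization-invariant densities, namely
\begin{equation}\label{homogi}
W_i(\F\G)=\det\G\, W_i(\F),\qquad \F\in\R^{m\times N},\ \G\in\R^{N\times N}_+.
\end{equation}
Writing $W_i(\F)=\inf_{\X\in\R^{N\times N}_+}\det\X\,W(\F\X^{-1})$ as in \eqref{dosvar} and \eqref{integrandoi}, I would perform the substitution $\X=\bZ\G$ inside the infimum defining $W_i(\F\G)$. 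Since $\det\G>0$, the map $\bZ\mapsto\bZ\G$ is a bijection of $\R^{N\times N}_+$ onto itself, and with $\det\X=\det\bZ\det\G$ and $(\F\G)\X^{-1}=(\F\G)(\G^{-1}\bZ^{-1})=\F\bZ^{-1}$ the factor $\det\G$ pulls outside the infimum, which is exactly \eqref{homogi}. The only point requiring care here is that the substitution genuinely permutes the admissible class $\R^{N\times N}_+$, which is immediate from $\det\G>0$.

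Granting \eqref{homogi}, invariance of $I_i^*$ under any $\cD_0\subset\cD$ follows from a plain change of variables. For $\Phi\in\cD$ I would set $\bbx=\Phi^{-1}(\bby)$, use the chain rule $\nabla\bbu_\Phi(\bby)=\nabla\bbu(\bbx)\,\nabla\Phi(\bbx)^{-1}$ together with $d\bby=\det\nabla\Phi(\bbx)\,d\bbx$, and obtain
$$
I_i^*(\bbu_\Phi)=\int_\Omega W_i\!\left(\nabla\bbu(\bbx)\,\nabla\Phi(\bbx)^{-1}\right)\det\nabla\Phi(\bbx)\,d\bbx.
$$
Applying \eqref{homogi} with $\G=\nabla\Phi(\bbx)^{-1}$, so that $\det\G=1/\det\nabla\Phi(\bbx)$, cancels the two determinant factors pointwise and collapses the integral to $\int_\Omega W_i(\nabla\bbu)\,d\bbx=I_i^*(\bbu)$. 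As this holds for every $\Phi\in\cD$, the functional $I_i^*$ is invariant, and hence invariant under each prescribed subclass $\cD_0$.

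The chain $I_i^*\le I_i\le I$ is then purely a comparison of integrands. For the right-hand inequality I would take $\Phi=\id$ in \eqref{invarianthull}, for which $\nabla\Phi$ is the identity matrix and $\overline W(\nabla\Phi,\F)=W(\F)$, giving $I_i\le I$ (as already recorded earlier). For the left-hand inequality, fix an arbitrary admissible $\Phi\in\cD_0$; since $\nabla\Phi(\bbx)\in\R^{N\times N}_+$ for a.e.\ $\bbx$, the definition \eqref{integrandoi} of $W_i$ as a pointwise infimum over $\R^{N\times N}_+$ forces
$$
W_i(\nabla\bbu(\bbx))\le\overline W(\nabla\Phi(\bbx),\nabla\bbu(\bbx))\qquad\text{a.e.\ in }\Omega.
$$
Integrating over $\Omega$ and then taking the infimum over $\Phi\in\cD_0$ yields $I_i^*\le I_i$. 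No serious obstacle appears: the whole argument is pointwise and algebraic, the only delicate moment being the substitution inside the infimum in \eqref{homogi}, which must be checked to preserve the cone $\R^{N\times N}_+$.
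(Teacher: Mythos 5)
Your argument is correct, and it is precisely the elementary proof the paper omits with the remark that the statement ``does not require any further comment'': the multiplicative identity $W_i(\F\G)=\det\G\,W_i(\F)$ obtained by the substitution $\X=\bZ\G$ in the infimum, the change of variables for invariance, $\Phi=\id$ (which lies in $\cD_\id\subset\cD_0$) for $I_i\le I$, and the pointwise bound $W_i(\nabla\bbu)\le\overline W(\nabla\Phi,\nabla\bbu)$ for $I_i^*\le I_i$. Nothing is missing.
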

The proof is elementary and does not require any further comment. What is, on the other hand, a remarkable issue is to decide under what circumstances the invariant hull of an integral functional \eqref{dosfuncionales} is given by $I_i^*$. 
\begin{conjecture}\label{conjetura}
Suppose the functional $I$ is of the typical integral form
$$
I(u)=\int_\Omega W(\nabla\bu(\bbx))\,d\bbx
$$
for a certain continuous integrand 
$$
W(\F):\R^{m\times N}\to\R.
$$ 
If its invariant hull $I_i$ with respect to some invariant class $\cD_0$ turns out to be another integral functional, then $I_i\equiv I_i^*$
$$
I_i(u)=\int_\Omega W_i(\nabla\bu(\bbx))\,d\bbx,
$$
where $W_i$ is given in \eqref{integrandoi}. 
\end{conjecture}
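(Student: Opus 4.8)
The plan is to identify the density of $I_i$ by trapping it between $W_i$ and $W$ and then exploiting invariance to close the gap. Write $I_i(\bu)=\int_\Omega V(\nabla\bu(\bbx))\,d\bbx$ for the hypothesized continuous density $V(\F):\R^{m\times N}\to\R$. First I would extract pointwise information from the functional inequalities. By Proposition \ref{minimo} we have $I_i^*\le I_i\le I$ for every competitor, and since all three are now integral functionals with gradient-dependent densities, testing on affine maps $\bu(\bbx)=\F\bbx$ over $\Omega$ and dividing by $|\Omega|$ yields the two-sided pointwise bound
\[
W_i(\F)\le V(\F)\le W(\F),\qquad \F\in\R^{m\times N}.
\]
In particular $W_i\le V$ already supplies the easy half of the desired identity.

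The crux is to turn $V\le W$ into $V\le W_i$, and here I would use that $I_i$ is invariant under $\cD_0$ by construction. Because $I_i$ is both invariant and (by hypothesis) a local integral functional, its density must obey the reparameterization-invariance condition recalled in the Introduction around \eqref{representation}, namely $V(\F\G)=\det\G\,V(\F)$ for every $\G\in\R^{N\times N}$ with $\det\G>0$. Taking $\G=\X^{-1}$ rewrites this as $V(\F\X^{-1})=(\det\X)^{-1}V(\F)$, i.e.\ $\overline V(\X,\F)=V(\F)$ for all $\X\in\R^{N\times N}_+$, where $\overline V(\X,\F)=\det\X\,V(\F\X^{-1})$ is the analogue of \eqref{dosvar}.

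Combining the two ingredients finishes the argument. For any $\X\in\R^{N\times N}_+$,
\[
V(\F)=\det\X\,V(\F\X^{-1})\le\det\X\,W(\F\X^{-1})=\overline W(\X,\F),
\]
where the first equality is the scaling law and the inequality is the pointwise bound $V\le W$ evaluated at $\F\X^{-1}$ and multiplied by $\det\X>0$. Passing to the infimum over $\X\in\R^{N\times N}_+$ gives $V(\F)\le\inf_{\X}\overline W(\X,\F)=W_i(\F)$, which together with $W_i\le V$ yields $V\equiv W_i$ and hence $I_i\equiv I_i^*$, as claimed.

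The step I expect to be the main obstacle is the second one: rigorously justifying that $\cD_0$-invariance of the integral functional $\int_\Omega V(\nabla\bu)\,d\bbx$ forces the full homogeneity $V(\F\G)=\det\G\,V(\F)$. The characterization cited in the Introduction is stated for invariance under the entire reparameterization group $\cD$; if $\cD_0$ is a proper subclass, one must verify that it still contains enough locally linear diffeomorphisms to recover the pointwise algebraic identity. Concretely, I would construct, near an arbitrary point, diffeomorphisms of $\Omega$ onto itself whose gradient equals a prescribed $\G$ with $\det\G>0$ on a small subcube and interpolates to the identity outside, and then blow up to localize the invariance relation; this is the standard passage from global invariance to the pointwise condition, and it is the only place where the richness of $\cD_0$ is genuinely used. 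By contrast, the localization in the first step, from functional to pointwise inequalities via affine test maps, is routine for continuous densities and requires no additional hypotheses.
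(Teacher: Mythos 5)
The statement you have set out to prove is labeled a \emph{Conjecture} in the paper: the author gives no proof, and indeed later asks explicitly whether counterexamples exist. Your argument collapses at the step you yourself flagged as the main obstacle, and the collapse is not a repairable technicality. Invariance of the integral functional $\int_\Omega V(\nabla\bu)\,d\bbx$ under $\cD_0$ does \emph{not} force the homogeneity $V(\F\G)=\det\G\,V(\F)$. Testing invariance on affine maps $\bu=\F\bbx$ and changing variables gives only
$$
\int_\Omega \det\nabla\Phi(\bby)\,V\left(\F\,\nabla\Phi(\bby)^{-1}\right)d\bby=|\Omega|\,V(\F)\quad\hbox{for all }\Phi\in\cD_0,
$$
i.e.\ that $g_\F(\X):=\det\X\,V(\F\X^{-1})$ has constant integral along gradients of admissible diffeomorphisms. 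The desired identity $V(\F\G)=\det\G\,V(\F)$ is the statement that $g_\F$ is \emph{constant} on $\R^{N\times N}_+$, and that cannot follow: every $\Phi\in\cD$ maps $\Omega$ onto $\Omega$, so $\int_\Omega\det\nabla\Phi\,d\bby=|\Omega|$ always, and hence the nonconstant function $g_\F(\X)=\det\X$ satisfies the displayed identity for \emph{every} $\Phi$ and every $\cD_0$. The most any localization argument can extract is that $g_\F$ is quasi-affine (a null Lagrangian) in $\X$ on $\det\X>0$, and the gap between ``quasi-affine'' and ``constant'' is exactly what defeats you. Note also that the equivalence recalled around \eqref{representation} in the Introduction asserts that the homogeneity condition is \emph{sufficient} for invariance; it is not a characterization, and constants already violate the converse.

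In fact the conjecture is false as literally stated, so no proof can succeed without extra hypotheses. Take $W\equiv1$: then $I\equiv I_i\equiv|\Omega|$ is an integral functional with density $V\equiv1$, while $W_i(\F)=\inf_{\det\X>0}\det\X=0$ in \eqref{integrandoi}, so $I_i^*\equiv0\ne I_i$. A nontrivial instance in the setting of Theorem \ref{invarianza2}: for $W(\F)=\frac12|\F|^2+1$ one finds $I_i(\bbu)=\int_\Omega|\bbu_{,1}\wedge\bbu_{,2}|\,d\bbx+|\Omega|$, an integral functional, whereas $W_i(\F)=|\F_1\wedge\F_2|$, because in the pointwise infimum one may rescale $\X\mapsto t\X$, $t\to0^+$, to annihilate the degree-two additive term without disturbing the degree-zero part --- a rescaling unavailable to a global diffeomorphism of $\Omega$, which must preserve total volume. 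Your step 2 (the pointwise bounds $W_i\le V\le W$ via affine test maps) is sound, and your step 4 would be fine if step 3 held; but step 3 silently assumes away precisely the null-Lagrangian/additive-constant degeneracy that makes the conjecture delicate, and any correct version of the statement must be reformulated (e.g., for suitably homogeneous integrands) so as to exclude it.
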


\section{A fundamental two-dimensional example}
To start gaining some insight into high-dimensional examples, we would like to consider the particular initial situation in which
\begin{equation}\label{funcional}
I(\bu)=\frac12\int_\Omega|\nabla\bbu(\bbx)|^2,
\end{equation}
where $\Omega\subset\R^2$ is a model domain (a ball or a box), and maps
$$
\bbu(\bbx)=(u_1(\bbx), u_2(\bbx), u_3(\bbx)):\Omega\to\R^3
$$
belong to the Sobolev space $H^1(\Omega; \R^3)$. Keep in mind that 
$$
\nabla\bu=\begin{pmatrix}\nabla u_1\\\nabla u_2\\\nabla u_3\end{pmatrix}=\begin{pmatrix}\bu_{, 1}&\bu_{, 2}\end{pmatrix},\quad \bu_{, 1}=\frac{\partial\bu}{\partial x}, \bu_{, 2}=\frac{\partial\bu}{\partial y},
$$
and hence, we can recast our quadratic functional in \eqref{funcional} in the two alternative ways
$$
I(\bu)=\frac12\int_\Omega(|\bu_{, 1}(\bbx)|^2+|\bu_{, 2}(\bbx)|^2)\,d\bbx
$$
or
$$
I(\bu)=\frac12\int_\Omega(|\nabla u_1(\bbx)|^2+|\nabla u_2(\bbx)|^2+|\nabla u_3(\bbx)|^2)\,d\bbx.
$$
It is clear that $I$ is not invariant as it is easy to check that it does not fit into the form \eqref{representation}. Hence, 
we would like to calculate its invariant realization 
$$
I_i(\bbu)=\inf_{\Phi\in\cD_0(\Omega)}I(\bbu_\Phi),\quad \bbu_\Phi(\bbx)=\bbu(\Phi^{-1}(\bbx)),
$$
with respect to some suitable invariant class $\cD_0$. 
According to our earlier discussion, we need to face the vector variational problem
\begin{equation}\label{infd}
\hbox{Minimize in }\Phi\in\cD_0(\Omega):\quad \int_\Omega \overline W(\nabla\bbu(\bbx), \nabla\Phi(\bbx))\,d\bbx
\end{equation}
where
$$
\overline W(\F, \X): \R^{3\times2}\times\R^{2\times2} \to\R
$$
is given by
\begin{equation}\label{densidad}
\overline W(\F, \X)=\det\X \,W(\F\X^{-1})=\frac1{2\det\X}|\F\adj \X^T|^2,
\end{equation}
and the map $\bbu(\bbx):\Omega\to\R^3$ is fixed. 

It may be instructive to write things in a fully explicit form to facilitate some calculations. Our variational problem is 
$$
\hbox{Minimize in }\Phi\in\cD_0:\quad \int_\Omega \frac1{2\det\nabla\Phi(\bbx)}|\nabla\bbu(\bbx)\adj\nabla\Phi(\bbx)^T|^2\,d\bbx.
$$
Note that
$$
\nabla\Phi=\begin{pmatrix}\phi_{1, 1}&\phi_{1, 2}\\\phi_{2, 1}&\phi_{2, 2}\end{pmatrix},\quad
\adj\nabla\Phi^T=\begin{pmatrix}\phi_{2, 2}&-\phi_{1, 2}\\-\phi_{2, 1}&\phi_{1, 1}\end{pmatrix}=\begin{pmatrix}\bQ\nabla\phi_2&-\bQ\nabla\phi_1\end{pmatrix}
$$
if
$$
\Phi(\bbx)=(\phi_1(x_1, x_2), \phi_2(x_1, x_2)),\quad \bbx=(x_1, x_2),
$$
and 
$$
\bQ=\begin{pmatrix}0&1\\-1&0\end{pmatrix}, \quad \bQ^T=-\bQ,
$$
is the counterclockwise, $\pi/2$-rotation in the plane. 
We also have
$$
\nabla\bbu(\bbx)=\begin{pmatrix}u_{1, 1}&u_{1, 2}\\u_{2, 1}&u_{2, 2}\\u_{3, 1}&u_{3, 2}\end{pmatrix},\quad \bbu(\bbx)=(u_1(x_1, x_2), u_2(x_1, x_2), u_3(x_1, x_2)).
$$
The product occurring in functional $I(\bbu_\Phi)$ becomes
\begin{align}
\nabla\bbu(\bbx)\adj\nabla\Phi(\bbx)^T=&\begin{pmatrix}u_{1, 1}&u_{1, 2}\\u_{2, 1}&u_{2, 2}\\u_{3, 1}&u_{3, 2}\end{pmatrix}\,\begin{pmatrix}\bQ\nabla\phi_2&-\bQ\nabla\phi_1\end{pmatrix}\nonumber\\
=&\begin{pmatrix}\nabla u_1\cdot\bQ\nabla\phi_2&-\nabla u_1\cdot\bQ\nabla\phi_1\\
\nabla u_2\cdot\bQ\nabla\phi_2&-\nabla u_2\cdot\bQ\nabla\phi_1\\\nabla u_3\cdot\bQ\nabla\phi_2&-\nabla u_3\cdot\bQ\nabla\phi_1\end{pmatrix}.\nonumber
\end{align}
Hence
\begin{equation}\label{explicito}
I(\bbu_\Phi)=\int_\Omega\frac1{\nabla\phi_1\cdot\bQ\nabla\phi_2}\sum_{1\le i\le 3, 1\le j\le2}\frac12(\bQ\nabla u_i\cdot\nabla\phi_j)^2\,d\bbx,
\end{equation}
and we write, for each fixed map $\bbu(\bbx):\Omega\to\R^3$, 
$$
E_\bbu(\Phi)=I(\bbu_\Phi).
$$
If we put
\begin{gather}
\tilde W(\bU, \X)=\frac1{\X_1\cdot\bQ\X_2}\sum_{1\le i\le 3, 1\le j\le2}\frac12(\bU_i\cdot\X_j)^2,\label{densidadi}\\ 
\bU=\begin{pmatrix}\bU_1\\\bU_2\\\bU_3\end{pmatrix}\in\R^{3\times2}, \X=\begin{pmatrix}\X_1\\\X_2\end{pmatrix}\in\R^{2\times2},\nonumber
\end{gather}
then
$$
E_\bbu(\Phi)=\int_\Omega\tilde W(\nabla\bbu(\bx)\bQ^T, \nabla\Phi(\bbx))\,d\bbx.
$$
\begin{lemma}\label{basico}
Let $\F\in\R^{3\times2}$ be given.
\begin{enumerate}
\item  The integrands in \eqref{densidad} or in \eqref{densidadi}, for fixed $\F$, 
$$
\X\in\R^{2\times2}\mapsto\overline W(\F, \X), \tilde W(\F, \X)
$$
are polyconvex over the set $\det\X>0$, and positively homogeneous of degree zero. 
\item The absolute minimum of the function
$$
\X\mapsto \overline W(\F, \X)=\frac1{2\det\X}|\F\adj \X^T|^2
$$
takes place when the matrix
$$
\adj\X\,\F^T\F\,\adj\X^T
$$
is a multiple of the identity (of dimension $2\times2$), and in this case
\begin{equation}\label{igualdad}
\overline W(\F, \X)=|\F_1\wedge\F_2|,\quad \F=\begin{pmatrix}\F_1&\F_2\end{pmatrix}.
\end{equation}
Due to homogeneity, the minimum is taken on for all such multiples.  
\end{enumerate}
\end{lemma}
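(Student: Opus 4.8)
The plan is to handle both densities at once. A short computation identifies $\tilde W$ with $\overline W$ up to a fixed orthogonal right-multiplication of $\F$: since $\X_1\cdot\bQ\X_2=\det\X$ and the rows $\bU_i$ pair against the rows $\X_j$ exactly as in $|\F\adj\X^T|^2$ after the substitution, one gets $\tilde W(\F,\X)=\overline W(\F\bQ,\X)$ with $\bQ$ orthogonal and $\det\bQ=1$. Neither polyconvexity in $\X$ (for fixed $\F$) nor the value $|\F_1\wedge\F_2|=\sqrt{\det(\F^T\F)}$ is affected by such a substitution, so it suffices to argue for $\overline W$. The degree-zero homogeneity is then a one-line scaling check: for $t>0$ one has $\det(t\X)=t^2\det\X$, while the adjugate of a $2\times2$ matrix is linear in its entries, so $\adj(t\X)^T=t\,\adj\X^T$; in \eqref{densidad} the factors $t^2$ cancel.

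For part (1), the essential observation is that for $2\times2$ matrices the map $\X\mapsto\F\adj\X^T$ is \emph{linear}, so $\X\mapsto|\F\adj\X^T|^2$ is a nonnegative convex quadratic form. Polyconvexity in the $2\times2$ case means that, regarding $\X$ and $\delta=\det\X$ as independent arguments, the function $(\X,\delta)\mapsto\frac{1}{2\delta}|\F\adj\X^T|^2$ is jointly convex on $\delta>0$. The plan is to invoke the standard ``quadratic-over-linear'' fact that $(\bx,\delta)\mapsto|\bx|^2/\delta$ is jointly convex for $\delta>0$; precomposing with the linear map $\X\mapsto(\F\adj\X^T,\delta)$ preserves convexity, and substituting back $\delta=\det\X$ yields polyconvexity of $\overline W(\F,\cdot)$ on $\det\X>0$, hence of $\tilde W$.

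For part (2), I introduce the symmetric matrix
\[
M=\adj\X\,\F^T\F\,\adj\X^T=(\F\adj\X^T)^T(\F\adj\X^T)\in\R^{2\times2},
\]
which is positive semidefinite as a Gram matrix, so that $\tr M=|\F\adj\X^T|^2$, and by multiplicativity of the determinant together with Lagrange's identity $\det(\F^T\F)=|\F_1|^2|\F_2|^2-(\F_1\cdot\F_2)^2=|\F_1\wedge\F_2|^2$,
\[
\det M=(\det\X)^2\,|\F_1\wedge\F_2|^2 .
\]
In the geometric regime $\rank\F=2$ (so $|\F_1\wedge\F_2|>0$) I eliminate $\det\X=\sqrt{\det M}/|\F_1\wedge\F_2|$ and rewrite \eqref{densidad} as
\[
\overline W(\F,\X)=\frac{\tr M}{2\det\X}=\frac{|\F_1\wedge\F_2|}{2}\,\frac{\tr M}{\sqrt{\det M}} .
\]
For a positive semidefinite $2\times2$ matrix with eigenvalues $\mu_1,\mu_2\ge0$, the arithmetic–geometric mean inequality gives $\tr M=\mu_1+\mu_2\ge2\sqrt{\mu_1\mu_2}=2\sqrt{\det M}$, with equality precisely when $\mu_1=\mu_2$, i.e. when $M$ is a scalar multiple of the $2\times2$ identity. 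Hence $\overline W(\F,\X)\ge|\F_1\wedge\F_2|$, with equality exactly when $\adj\X\,\F^T\F\,\adj\X^T$ is a multiple of the identity, which is \eqref{igualdad}; by the degree-zero homogeneity already established, the entire ray of such $\X$ realizes the minimum.

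I expect the polyconvexity in part (1) to be the only step requiring genuine input, namely the joint convexity of the matrix–fractional function, whereas part (2) is elementary once the quantity is recast through $M$ and its trace and determinant. Two points deserve care: the reduction of $\tilde W$ to $\overline W$ should be verified to be exactly the substitution $\F\mapsto\F\bQ$, and the rank-deficient case $\rank\F<2$ must be separated off — there $|\F_1\wedge\F_2|=0$ and the infimum is still $0$ but is only approached, not attained, consistent with the minimum being realized only in the geometric regime.
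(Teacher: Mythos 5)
Your proposal is correct. For part (1) it coincides with the paper's argument: the paper writes $\overline W(\F,\X)=w_\F(\det\X,\adj\X^T)$ with $w_\F(t,\X)=\frac1{2t}|\F\X|^2$ jointly convex, which is exactly your ``quadratic-over-linear precomposed with the linear map $\X\mapsto\F\adj\X^T$'' observation; the reduction $\tilde W(\F,\X)=\overline W(\F\bQ,\X)$ (up to replacing $\bQ$ by $\bQ^T$, immaterial since both are rotations leaving $|\F_1\wedge\F_2|$ and polyconvexity unchanged) is also the implicit content of the paper's remark that ``the arguments for $\tilde W$ are exactly the same.'' For part (2), however, you take a genuinely different route. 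The paper treats it as a critical-point computation: it differentiates $\X\mapsto\det\X\,|\F\X^{-1}|^N$ (in the general-$N$ Lemma on $\overline W^N$), derives the stationarity condition forcing $\adj\X\,\F^T\F\,\adj\X^T$ to be a multiple of the identity, and then evaluates the density there by taking determinants. Your argument instead introduces the Gram matrix $M=(\F\adj\X^T)^T(\F\adj\X^T)$, notes $\tr M=|\F\adj\X^T|^2$ and $\det M=(\det\X)^2|\F_1\wedge\F_2|^2$, and applies AM--GM to the eigenvalues of $M$. This buys you something the paper's computation does not deliver on its own: a direct \emph{global} lower bound $\overline W(\F,\X)\ge|\F_1\wedge\F_2|$ with a sharp equality characterization, rather than merely identifying critical points; it also cleanly isolates the rank-deficient case $\rank\F<2$, where the infimum is $0$ but unattained, a caveat the paper leaves implicit by restricting to regular parameterizations. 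The price is that the AM--GM trick is special to $N=2$ (for general $N$ one would need $\tr M\ge N(\det M)^{1/N}$, which still works but is no longer the paper's differential argument). One very minor omission: you do not remark that a matrix $\X$ with $\det\X>0$ making $M$ a multiple of the identity actually exists when $\rank\F=2$ (take $\adj\X^T$ proportional to $(\F^T\F)^{-1/2}$), but the paper's proof is equally silent on this point.
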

\begin{proof}
We comment on the integrand $\overline W$. The arguments for $\tilde W$ are exactly the same. 

The polyconvexity is elementary because the real function
$$
w_\F(t, \X):\R^+\times\R^{2\times2}\to\R,\quad w_\F(t, \X)=\frac1{2t}|\F\X|^2
$$
is convex in all of its arguments. Note that
$$
\overline W(\F, \X)=w_\F(\det\X, \adj\X^T).
$$
Check also closely related calculations in \cite{pedregal}. 
The homogeneity is straightforward. The second item is an interesting Multivariate Calculus exercise that can be facilitated by the explicit form in \eqref{densidad}. We will go through a more general calculation below.
\end{proof}

It is quite remarkable that the infimum in \eqref{infd}, coming from \eqref{funcional} can be computed explicitly for every map $\bbu$ for suitable classes $\cD_0$. This is something exclusive of the quadratic functional in \eqref{funcional}. In fact this is a very special situation for which, in the context of Subsection \ref{estrella}, $I_i$ and $I^*_i$, with integrand in \eqref{igualdad} coincide
\begin{theorem}\label{invarianza2}
Fix three different points $\bP_i$, $i=1, 2, 3$, on the boundary $\partial\Omega$ of the unit circle $\Omega\subset\R^2$. Let $\cD_0$ be the subclass of $\cD$ maintaining each $\bP_i$ unchanged. 

The invariant realization of $I$ in \eqref{infd} with respect to $\cD_0$ is the area functional
$$
I_i(\bbu)=\int_\Omega|\bbu_{, 1}(\bbx)\wedge\bbu_{, 2}(\bbx)|\,d\bbx
$$
for each smooth $\bbu(\bbx):\Omega\to\R^3$.
\end{theorem}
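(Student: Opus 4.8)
The plan is to trap $I_i(\bbu)$ between the area functional from below and from above, and to observe that the two bounds coincide; the only nontrivial ingredient will be producing a conformal change of variables inside $\cD_0$.

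For the lower bound I would simply invoke the hierarchy $I_i^*\le I_i\le I$ from Proposition \ref{minimo}, which holds for every admissible class, in particular for the present $\cD_0$, and then identify the integrand $W_i$ of $I_i^*$. By \eqref{integrandoi}, $W_i(\F)$ is exactly the pointwise infimum over $\X\in\R^{2\times2}_+$ of $\overline W(\F,\X)$, and the second item of Lemma \ref{basico} computes this infimum to be $|\F_1\wedge\F_2|$. Hence $W_i(\nabla\bbu)=|\bbu_{,1}\wedge\bbu_{,2}|$ and
$$
I_i(\bbu)\ge I_i^*(\bbu)=\int_\Omega|\bbu_{,1}(\bbx)\wedge\bbu_{,2}(\bbx)|\,d\bbx.
$$
I would emphasize that this half uses nothing about the three fixed points.

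For the upper bound I would use Corollary \ref{supinf} with $E$ the area functional, checking three things. First, $E$ is invariant: it is of the form \eqref{representation} with $w(\bv)=|\bv|$ positively homogeneous of degree one, hence geometric. Second, $E\le I$ pointwise, since $|\bbu_{,1}\wedge\bbu_{,2}|\le|\bbu_{,1}||\bbu_{,2}|\le\frac12(|\bbu_{,1}|^2+|\bbu_{,2}|^2)$, the same inequality that yields $W_i\le W$. Third --- and this is the crux --- for each fixed smooth $\bbu$ I must exhibit a diffeomorphism $\Phi\in\cD_0$ for which $I(\bbu_\Phi)$ realizes $E(\bbu)$. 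Here I would use the mechanism behind Lemma \ref{basico}: the pointwise minimum of $\overline W(\nabla\bbu,\X)$ is attained exactly when $\adj\X\,(\nabla\bbu)^T\nabla\bbu\,\adj\X^T$ is a multiple of the identity, and, unravelling the adjugate with $\X=\nabla\Phi$, this condition is precisely the statement that the reparameterized map is conformal, $(\nabla\bbu_\Phi)^T\nabla\bbu_\Phi\propto\id$. At such a $\Phi$ one has $\overline W(\nabla\bbu,\nabla\Phi)=|\bbu_{,1}\wedge\bbu_{,2}|$ pointwise, so $I(\bbu_\Phi)=E(\bbu_\Phi)=E(\bbu)$ by invariance of the area; combined with the lower bound this forces $I_i\equiv E$ and exhibits $\Phi$ as an actual minimizer of \eqref{infd}.

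The matter therefore reduces to producing a conformal reparameterization lying in $\cD_0$, and this is where the main obstacle resides. For a smooth immersion $\bbu$ the pullback $g=(\nabla\bbu)^T\nabla\bbu$ is a genuine Riemannian metric on the disk, and the classical existence of isothermal coordinates (solving the Beltrami equation) together with the Riemann mapping theorem with boundary regularity (Carath\'eodory) yields a conformal parameterization of $\bbu(\Omega)$ over the standard disk. The residual freedom is exactly the three-real-parameter group of conformal automorphisms (M\"obius transformations) of the disk, and I would use it to normalize the boundary correspondence so that $\Phi$ fixes the prescribed points $\bP_1,\bP_2,\bP_3$; this is the Douglas--Rad\'o three-point normalization, and it is precisely why $\cD_0$ is chosen as it is. I expect the delicate points to be (i) the regularity and orientation of $\Phi$ up to $\partial\Omega$, so that genuinely $\Phi\in\cD_0$, and (ii) maps $\bbu$ that fail to be immersions on a nonempty set, where $g$ degenerates and one must either restrict to immersions or replace the exact conformal change of variables by an approximating sequence $\{\Phi_j\}\subset\cD_0$ and appeal to the sequential form of Corollary \ref{supinf}.
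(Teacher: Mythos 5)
Your proposal is correct and follows essentially the same route as the paper: the lower bound comes from the pointwise minimization in Lemma \ref{basico} (equivalently Proposition \ref{minimo}), and the upper bound from producing a conformal reparameterization in $\cD_0$ by solving the Beltrami equation for the pullback metric $(\nabla\bbu)^T\nabla\bbu$ with the three-point normalization, exactly as in the paper's appeal to quasiconformal theory. Your version merely makes the sandwich argument via Corollary \ref{supinf} more explicit and flags the boundary-regularity and degeneracy caveats, which the paper leaves implicit.
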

\begin{proof}
Because of the second part of Lemma \ref{basico}, the proof amounts to showing that the metric given in the unit disk by the symmetric definite-positive matrix
$$
\begin{pmatrix}|\bbu_{ ,1}|^2&\bbu_{ ,1}\cdot\bbu_{ ,2}\\\bbu_{ ,1}\cdot\bbu_{ ,2}&|\bbu_{ ,2}|^2\end{pmatrix}
$$
can be made rigid, i.e. a multiple of the identity, through a suitable map $\Phi\in\cD_0$. This map $\Phi$ is nothing but a solution of the Beltrami equation
\begin{gather}
\overline\partial\Phi=\mu\,\partial\Phi,\nonumber\\ 
\mu=\frac{|\bbu_{ ,1}|^2-|\bbu_{ ,2}|^2+2i\bbu_{ ,1}\cdot\bbu_{ ,2}}{|\bbu_{ ,1}|^2+|\bbu_{ ,2}|^2+2\sqrt{|\bbu_{ ,1}|^2|\bbu_{ ,2}|^2-(\bbu_{ ,1}\cdot\bbu_{ ,2})^2}}.\nonumber
\end{gather}
In addition, such a solution $\Phi$ can indeed be found complying with the condition on the three points $\bP_i$. All of this is well-known in the theory of quasi-conformal maps. Check \cite{asiwga}, \cite{iwon2}, \cite{iwmaronn}. In particular, the main theorem in \cite{shibata} is also enlightening. 
\end{proof}

\begin{corollary}[Minimal surfaces]\label{minimall}
Let $\cD_0$ be an invariant class of those determined in the previous result, and let $\cU_0\subset H^1(\Omega; \R^2)$ be compatible with $\cD_0$ as in Definition \ref{invarianza}. Every minimizer $\bbu_0\in\cU_0$ of the problem
\begin{equation}\label{primitivo}
\hbox{Minimize in }\bbu\in\cU_0:\quad \int_\Omega\frac12|\nabla\bbu(\bbx)|^2\,d\bbx
\end{equation}
will be a minimizer for
$$
\hbox{Minimize in }\bbu\in\cU_0:\quad \int_\Omega|\bbu_{, 1}(\bbx)\wedge\bbu_{ , 2}(\bbx)|\,d\bbx.
$$
\end{corollary}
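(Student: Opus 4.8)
The plan is to read this corollary as an immediate instance of Proposition \ref{existencia}, once Theorem \ref{invarianza2} is invoked to identify the invariant realization of the Dirichlet energy with the area functional. Write $I(\bbu)=\frac12\int_\Omega|\nabla\bbu(\bbx)|^2\,d\bbx$ for the functional appearing in \eqref{primitivo}, and let $I_i$ denote its invariant realization with respect to the class $\cD_0$ that fixes the three boundary points $\bP_i$. Theorem \ref{invarianza2} says precisely that
$$
I_i(\bbu)=\int_\Omega|\bbu_{,1}(\bbx)\wedge\bbu_{,2}(\bbx)|\,d\bbx,
$$
so the area functional \emph{is} the invariant hull of the Dirichlet energy over $\cD_0$. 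With that identification in hand, the corollary is no longer a new computation but a structural consequence of the $I\mapsto I_i$ formalism.

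The second step is to apply Proposition \ref{existencia} with $\cU=\cU_0$. Since $\cU_0$ is compatible with $\cD_0$ in the sense of Definition \ref{invarianza}, the class $\cU_0$ is invariant under the action \eqref{accion}, which is exactly the hypothesis that Proposition \ref{existencia} requires. That proposition then yields both the equality of infima and the transfer of minimizers. Concretely, if $\bbu_0\in\cU_0$ minimizes $I$ over $\cU_0$, the chain
$$
I_i(\bbu_0)\le I(\bbu_0)=\inf_{\bbu\in\cU_0}I(\bbu)=\inf_{\bbu\in\cU_0}I_i(\bbu)\le I_i(\bbu_0)
$$
forces equality throughout: the first inequality is the general bound $I_i\le I$, the middle equality is the invariance of $\cU_0$, and the last inequality is the definition of the infimum. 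Hence $I_i(\bbu_0)=\min_{\bbu\in\cU_0}I_i(\bbu)$, and since $I_i$ is the area functional, $\bbu_0$ is a minimizer of area over $\cU_0$, which is the assertion.

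The step I expect to require the most care is the passage from the smooth setting of Theorem \ref{invarianza2} to the full Sobolev class $\cU_0\subset H^1(\Omega;\R^3)$: the explicit formula $I_i=\int_\Omega|\bbu_{,1}\wedge\bbu_{,2}|$ was derived for smooth $\bbu$ via a solution $\Phi$ of the Beltrami equation, whereas a priori $\bbu_0$ lives only in $H^1$. The argument through Proposition \ref{existencia} is robust precisely because it uses only the two ingredients $I_i\le I$, valid by construction on all of $\cU_0$, and the equality of infima; it never appeals to coercivity of the auxiliary inner-variational integrand \eqref{densidad}, which indeed fails. To close the gap cleanly one should verify that the area identity for $I_i$ survives on $\cU_0$, either by density of smooth maps together with the lower semicontinuity of the area functional, or by observing that the quasiconformal reparameterization furnished by Theorem \ref{invarianza2} can be taken to depend measurably on $\bbu$ and delivers the same pointwise minimum a.e. This non-coercive yet still resolvable character of the two-dimensional quadratic case is exactly what makes the minimal-surface conclusion available here.
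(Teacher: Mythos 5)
Your argument is exactly the paper's: the corollary is obtained by combining Theorem \ref{invarianza2} (which identifies the invariant realization of the Dirichlet energy with the area functional) with Proposition \ref{existencia} (whose proof is precisely the chain of inequalities you reproduce). Your closing remark about extending the identification of $I_i$ from smooth maps to all of $\cU_0\subset H^1$ is a legitimate point of care that the paper glosses over, but it does not alter the route, which is the same.
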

This corollary is a direct consequence of Proposition \ref{existencia} after Theorem \ref{invarianza2}. There are two additional important facts for the classic problem of the minimal surfaces to be fully and satisfactorily solved. The first is to ensure that problem \eqref{primitivo} admits one minimizer. This is classical and pretty accesible. It requires, however, to deal with boundary conditions other than a typical Dirichlet datum around $\partial\Omega$ as in the classical three-point boundary condition in Theorem \ref{invarianza2}. 
The second one is much more delicate and is concerned with showing that such minimizer $\bbu_0$ represents a regular surface, that is to say the two vectors $\bbu_{0 , 1}$, $\bbu_{0 , 2}$ are independent for all $\bbx\in\Omega$. Plateau's problem is, however, much, much more. See survey \cite{harrisonpugh}, and notes \cite{schmidt} among an overwhelming amount of literature on this problem. 

\begin{remark}
It is interesting to notice that Proposition \ref{intermedio}, together with Theorem \ref{invarianza2}, permit to conclude that the invariant realization of the functional
$$
E(\bbu)=\int_\Omega \left|\frac{\partial\bbu}{\partial x_1}\right|\,\left|\frac{\partial\bbu}{\partial x_2}\right|\,d\bbx
$$
with respect to a class $\cD_0$ as in Theorem \ref{invarianza2} is given again by the area functional
$$
I_i(\bbu)=\int_\Omega|\bbu_{, 1}(\bbx)\wedge\bbu_{, 2}(\bbx)|\,d\bbx, 
$$
and, in particular according to Corollary \ref{minimall}, the minimizer $\bbu_0\in\cU_0$ will also be a minimizer for $E$ in $\cU_0$, despite the fact that the integrand for $E$ is not quasiconvex, nor coercive (see \cite{pedregalN}). 
\end{remark}

The previous analysis can hardly be extended to other functionals, even for simple fully explicit examples.
Suppose we start with an integral functional 
\begin{equation}\label{general}
\int_\Omega W(\nabla\bbu(\bbx))\,d\bbx
\end{equation}
for an integrand $W(\F):\R^{3\times 2}\to\R$ positively-homogeneous of degree two. Let $W_i$ be the integrand defined in \eqref{integrandoi}.  
\begin{problem}
Under what circumstances, is it true that the invariant realization of \eqref{general} is given by
$$
\int_\Omega W_i(\nabla\bbu(\bbx))\,d\bbx\hbox{?}
$$
Can examples be found for which this is not the case?
\end{problem}

\section{The higher-dimensional version}
Multidimensional examples are, as one would expect, much harder to treat. We restrict attention to the fundamental example of minimizing the $N$-volume of parameterized $N$-dimensional manifolds in $\R^m$, $m>N$. 

We want to examine the particular important case for the densities
\begin{gather}
W^N(\F)=\frac1{N^{N/2}}|\F|^N, \quad W^N_i(\F)=\sqrt{\det(\F^T\F)},\nonumber\\ 
\F=\begin{pmatrix} \F_1&\F_2&\dots&\F_N\end{pmatrix}\in\R^{m\times N},\nonumber
\end{gather}
and corresponding functionals
$$
I_N(\bbu)=\int_\Omega W^N(\nabla\bbu(\bbx))\,d\bbx,\quad V_N(\bbu)=\int_\Omega W^N_i(\nabla\bbu(\bbx))\,d\bbx
$$
for regular parameterizations 
$$
\bbu(\bbx)=(u_1(\bbx), u_2(\bbx), \dots, u_m(\bbx)):\Omega\subset\R^N\to\R^m
$$
of a piece of a $N$-manifold in $\R^m$. As before, $\Omega$ is a model domain in $\R^N$, and we can write
$$
\nabla\bbu=\begin{pmatrix}\nabla u_1\\\nabla u_2\\\dots\\\nabla u_m\end{pmatrix}=\begin{pmatrix}\bu_{, 1}&\bu_{, 2}&\dots&\bu_{, N}\end{pmatrix},\quad \bu_{, k}=\frac{\partial\bu}{\partial x_k}.
$$
In particular, 
$$
W^N(\F)=\left(\frac1N\sum_{k=1}^N|\F_k|^2\right)^{N/2}=\left(\frac1N\sum_{l=1}^m|\bbf_l|^2\right)^{N/2}
$$
if
$$
\F=\begin{pmatrix}\F_1&\F_2&\dots&\F_N\end{pmatrix}=\begin{pmatrix}\bbf_1\\\bbf_2\\\dots\\\bbf_m\end{pmatrix}.
$$
The integral
\begin{equation}\label{volumen}
V_N(\bbu)=\int_\Omega \sqrt{\det(\nabla\bbu(\bbx)^T\nabla\bbu(\bbx))}\,d\bbx
\end{equation}
yields the $N$-dimensional volume of the image $\bbu(\Omega)$ in $\R^m$, and, hence,  it is definitely an invariant functional as we will check just below. A result like Theorem \ref{invarianza2} is however not possible in the higher-dimensional case $N>2$. Recall that
\begin{equation}\label{densidadN}
\overline W^N(\F, \X)=\det\X \frac1{N^{N/2}} \left|\frac1{\det\X}\F\adj\X^T\right|^N=\frac1{N^{N/2}\det\X^{N-1}}\left|\F\adj\X^T\right|^N.
\end{equation}
\begin{lemma}\label{basicoN}
Let $\F\in\R^{m\times N}$ be given.
\begin{enumerate}
\item  The integrand in \eqref{densidadN} 
$$
\X\in\R^{N\times N}\mapsto\overline W^N(\F, \X), 
$$
is polyconvex over the set $\det\F>0$, and positively homogeneous of degree zero. 
\item The absolute minimum of the function
$$
\X\mapsto \overline W^N(\F, \X)
$$
takes place when the matrix
$$
\adj\X\,\F^T\F\adj\X^T
$$
is a multiple of the identity (of dimension $N\times N$). In this case, 
$$
\overline W^N(\F, \X)=W^N_i(\F).
$$
Due to homogeneity, the minimum is taken on for all such multiples.  
\end{enumerate}
\end{lemma}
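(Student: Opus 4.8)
The plan is to carry out, in full generality, the two-dimensional computation already indicated in Lemma \ref{basico} (this is the ``more general calculation'' promised there).

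For part (1), I would first dispose of the homogeneity by scaling $\X\mapsto t\X$ with $t>0$ in \eqref{densidadN}: since $\det(t\X)=t^{N}\det\X$ and $\adj(t\X)=t^{N-1}\adj\X$, the factors $t^{N(N-1)}$ in numerator and denominator cancel, giving $\overline W^N(\F,t\X)=\overline W^N(\F,\X)$, i.e. degree zero. For polyconvexity the aim is to display $\overline W^N$ as a convex function of the minors of $\X$. Writing
$$
\overline W^N(\F,\X)=w_\F(\det\X,\adj\X^T),\qquad w_\F(s,\mathbf Y)=\frac{1}{N^{N/2}}\,\frac{|\F\mathbf Y|^{N}}{s^{\,N-1}},
$$
the whole issue reduces to the joint convexity of $w_\F$ on $(0,\infty)\times\R^{N\times N}$, because the entries of $\adj\X^T$ are $(N-1)\times(N-1)$ minors of $\X$ and $\det\X$ is the top minor. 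To obtain this I would recognize $(s,\tau)\mapsto\tau^{N}/s^{N-1}$ as the perspective of the convex map $\tau\mapsto\tau^{N}$ on $[0,\infty)$ (hence jointly convex and nondecreasing in $\tau\ge0$) and compose it with the convex, nonnegative seminorm $\mathbf Y\mapsto|\F\mathbf Y|$; the monotone-composition rule then gives joint convexity of $w_\F$, and thus polyconvexity of $\overline W^N$ on $\{\det\X>0\}$, extended by $+\infty$ elsewhere as in \eqref{modw}.

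For part (2), set $\A=\F^T\F$, which is symmetric and, for a regular $\F$ of rank $N$, positive definite, and rewrite $|\F\adj\X^T|^2=\tr(\adj\X\,\A\,\adj\X^T)$. I would then reparametrize by $\nB=\adj\X$: the adjugate maps $\{\det\X>0\}$ onto $\{\det\nB>0\}$ (given $\nB$ with $\det\nB>0$, the matrix $\X=(\det\nB)^{1/(N-1)}\nB^{-1}$ satisfies $\adj\X=\nB$), with $\det\nB=(\det\X)^{N-1}$, so minimizing $\overline W^N(\F,\X)$ is the same as minimizing $N^{-N/2}\big(\tr(\nB\A\nB^T)\big)^{N/2}/\det\nB$ over $\det\nB>0$. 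Substituting $\G=\nB\A^{1/2}$, with $\sigma_1,\dots,\sigma_N>0$ the singular values of $\G$, I would use $\tr(\nB\A\nB^T)=|\G|^2=\sum_i\sigma_i^2$ and $\det\nB=\det\G/(\det\A)^{1/2}=\prod_i\sigma_i/(\det\A)^{1/2}$ to turn the objective into
$$
\frac{(\det\A)^{1/2}}{N^{N/2}}\,\frac{\big(\sum_i\sigma_i^2\big)^{N/2}}{\prod_i\sigma_i}.
$$

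The conclusion then follows from AM--GM applied to $\sigma_1^2,\dots,\sigma_N^2$: since $\big(\sum_i\sigma_i^2\big)^{N/2}\ge N^{N/2}\prod_i\sigma_i$ with equality iff all $\sigma_i$ coincide, the objective is bounded below by $(\det\A)^{1/2}=\sqrt{\det(\F^T\F)}=W^N_i(\F)$, and this value is attained precisely when all singular values of $\G$ are equal, i.e. when $\G\G^T=\nB\A\nB^T=\adj\X\,\F^T\F\,\adj\X^T$ is a scalar multiple of the identity; degree-zero homogeneity then produces the whole ray of minimizers, as claimed. The principal obstacle is the joint convexity of $w_\F$ in part (1): for $N=2$ the auxiliary function in Lemma \ref{basico} was the manifestly convex $|\F\mathbf Y|^2/(2s)$, whereas for general $N$ one genuinely needs the perspective-plus-monotone-composition argument. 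A secondary point needing care is the reduction in part (2), where both the surjectivity of the adjugate and the positive definiteness of $\F^T\F$ presuppose $\F$ of full rank $N$; the rank-deficient case, in which $W^N_i(\F)=0$, would require a short separate remark noting that the infimum is still $0$ but is in general not attained.
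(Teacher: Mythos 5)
Your argument is correct, but it reaches the conclusion by a genuinely different route from the paper's, and in one respect it is stronger. For part (2) the paper works with $\A(\X)$ defined by $\A(\X)\X=\F$, differentiates $\overline W^N(\F,\cdot)=\det\X\,|\A(\X)|^N$ (up to the constant), derives the first-order condition, reads off from it that $\adj\X\,\F^T\F\,\adj\X^T$ must be a multiple of the identity at any critical point, and then takes determinants to evaluate $\overline W^N$ there; it does not separately verify that this critical value is the global infimum. Your substitution $\nB=\adj\X$, $\G=\nB(\F^T\F)^{1/2}$, followed by AM--GM on the squared singular values of $\G$, proves the global inequality $\overline W^N(\F,\X)\ge\sqrt{\det(\F^T\F)}$ outright together with an exact characterization of equality, so it closes that gap and delivers the ``absolute minimum'' claim directly. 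For part (1) the paper essentially asserts polyconvexity for general $N$ (pointing back to the quadratic case of Lemma \ref{basico} and to \cite{pedregal}); your perspective-function argument --- $\tau^N/s^{N-1}$ as the perspective of $\tau\mapsto\tau^N$, composed monotonically with the seminorm $\mathbf Y\mapsto|\F\mathbf Y|$ --- is precisely the general-$N$ justification that replaces the manifest convexity of $|\F\mathbf Y|^2/(2s)$ available when $N=2$. Two small points: with the paper's convention $\X^{-1}=\frac1{\det\X}\adj\X^T$ one has $\adj\X=\det\X\,\X^{-T}$, so the preimage in your surjectivity claim should be $\X=(\det\nB)^{1/(N-1)}\nB^{-T}$ rather than $\nB^{-1}$ (harmless, since the adjugate is still a bijection of $\{\det>0\}$ onto itself); and your closing remark on rank-deficient $\F$, where the infimum is $0$ and in general unattained, covers a degenerate case the paper leaves implicit by restricting to regular parameterizations.
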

\begin{proof}
Except for a constant to be determined, we write
$$
\overline W(\F, \X)=\det\X|\F\X^{-1}|^N,
$$
or even better for computations
$$
\overline W(\F, \X)=\det\X|\A(\X)|^N,\quad \A(\X)\X=\F.
$$
Here $\A=\A(\X)$ and $\F$ are $m\times N$-matrices, while $\X$ is a non-singular $N\times N$-matrix. In particular, by differentiation, 
$$
\bD\A(\X)\X+\A(\X)=\bcero.
$$
From this equation, we find that
\begin{gather}
\A(\X)^T\bD\A(\X)=-\A(\X)^T\A(\X) \X^{-1},\nonumber\\
\bD\A(\X)^T\A(\X)=-\X^{-T}\A(\X)^T\A(\X).\nonumber
\end{gather}
Since
$$
\A(\X)=\F\X^{-1}=\frac1{\det\X}\F\adj\X^T,
$$
we can write
\begin{equation}\label{formulae}
\det\X^3\,\bD\A(\X)^T\A(\X)=-\adj\X\adj\X\,\F^T\F\adj\X^T.
\end{equation}
On the other hand, 
$$
\bD_\X\overline W(\F, \X)=\adj\X|\A(\X)|^N+N\det\X|\A(\X)|^{N-2}\bD\A(\X)^T\A(\X),
$$
and \eqref{formulae} carries us, for a critical matrix $\X$, to 
$$
\det\X^2|\A(\X)|^2\adj\X-N|\A(\X)|^{N-2}\adj\X\adj\X\,\F^T\F\adj\X^T=\bcero.
$$
Since $\adj\X$ is not singular, this is our claim.  Indeed, if we take determinant in this last equation, we see that
$$
(\det\X)^{2N}|\A(\X)|^{2N}=N^N(\det\X)^{2(N-1)}\det(\F^T\F),
$$
or
$$
(\det\X)^2|\A(\X)|^{2N}=N^N\det(\F^T\F), 
$$
that is to say
$$
\overline W(\F, \X)=N^{N/2}\sqrt{\det(\F^T\F)}.
$$
\end{proof}

From Proposition \ref{minimo}, all we can conclude is the following.
\begin{corollary}
The integrand
$$
V_N(\bbu)=\int_\Omega W_i^N(\nabla\bbu(\bbx))\,d\bbx
$$
given in \eqref{volumen},
is invariant for every invariant subclass $\cD_0\subset\cD$, and
$$
V_N(\bbu)\le I_i(\bbu)\le I_N(\bbu).
$$
\end{corollary}
Concerning the possibility that $V_N=I_i$, it looks unlikely to be so given the classical Liouville theorem restricting the solutions of the equation
$$
\nabla\Phi(\bbx)^T\nabla\Phi(\bbx)=\det(\nabla\Phi(\bbx))^{2/N}\G(\bbx),
$$
for a suitable symmetric, matrix-valued mapping 
$$
\G(\bbx):\Omega\to\bbS(N),\quad \det\G(\bbx)=1,
$$ 
to Möbius transformations (\cite{iwmar}). Note that such a mapping $\G$ complying with the previous equation comes directly from 
$$
\nabla\bbu(\bbx)^T\nabla\bbu(\bbx)=\mu(\bbx)(\det\nabla\Phi(\bbx))^{2(N-1)}\nabla\Phi(\bbx)^T\nabla\Phi(\bbx),
$$
which is the basic functional equation in the proof of Lemma \ref{basicoN}. 
\begin{problem}
For $N>2$, find an alternative density  
$$
\tilde W^N(\F)\to\R,\quad \F\in\R^{m\times N},
$$
positively homogeneous of degree $N$ such that
$$
I_i(\bbu)=V_N(\bbu)
$$
for smooth mappings $\bbu:\Omega\subset\R^N\to\R^m$.
\end{problem}
If Conjecture \ref{conjetura} turns out to be correct, all that is required is to design $\tilde W$ in such a way that the corresponding invariant realization $I_i$ is a local, integral functional. 

We finish with another problem whose solution seems out of reach. 

\begin{problem}
Consider the power integral functional
$$
I_p(\bbu)=\int_\Omega |\nabla\bbu(\bbx)|^p\,d\bbx,\quad p>0,
$$
over some specified invariant class $\cD_0\subset\cD$. For what ranges of the exponent $p$, is the invariant hull of $I_p$ trivial, or of non-local, integral form?
\end{problem}

\end{document}